\newcommand{\bs}{\boldsymbol}
\newcommand{\p}{\mathbb P}
\newcommand{\e}{\mathbb E}
\newcommand{\D}{\mathrm d}
\newcommand{\levy}{L\'{e}vy }
\newcommand{\cadlag}{c\`adl\`ag}
\newcommand{\C}{\mathbb C}
\newcommand{\R}{\mathbb R}
\renewcommand{\Re}{{\rm Re}}
\newcommand{\Cpos}{\mathbb C^{\Re>0}}
\newcommand{\diag}{\mathrm {diag}}
\newcommand{\ind}[1]{\mbox{\rm\large  1}_{\{#1\}}}
\renewcommand{\a}{\alpha}
\newcommand{\bv}{{\bs v}}
\newcommand{\1}{{\bs 1}}
\newcommand{\0}{{\bs 0}}
\newcommand{\eL}{{\bs L}}
\newcommand{\matI}{\mathbb{I}}
\newcommand{\oX}{{\overline X}}
\newcommand{\uX}{{\underline X}}
\newcommand{\Rl}{{R_*}}
\newcommand{\Ru}{{R^*}}
\newcommand{\tx}[1]{\tau^{\{#1\}}}
\newtheorem{theorem}{Theorem}
\newtheorem{lemma}[theorem]{Lemma}
\newtheorem{proposition}[theorem]{Proposition}
\newtheorem{corollary}[theorem]{Corollary}
\theoremstyle{remark}
\newtheorem{remark}{Remark}[section]
\begin{document}
\bibliographystyle{plain}
\title{Occupation densities in solving exit problems for\\ Markov additive processes and their reflections.}
\author{Jevgenijs Ivanovs\footnote{Eindhoven
University of Technology and University of Amsterdam, e-mail: jevgenijs.ivanovs@gmail.com}
\qquad Zbigniew Palmowski\footnote{University of Wroc\l aw,
e-mail: zpalma@math.uni.wroc.pl}}

\maketitle \abstract{This paper solves exit problems for spectrally negative Markov additive processes and their reflections. A so-called scale matrix, which is a generalization of the scale function of a spectrally negative \levy process, plays a central role in the study of exit problems. Existence of the scale matrix was shown in~\cite[Thm. 3]{palmowski_fluctuations}. We provide a probabilistic construction of the scale matrix, and identify the transform. In addition, we generalize to the MAP setting the relation between the scale function and the excursion (height) measure. The main technique is based on the occupation density formula
and even in the context of fluctuations of spectrally negative L\'{e}vy processes this idea seems to be new. Our representation of the scale matrix $W(x)=e^{-\Lambda x}\eL(x)$ in terms of nice probabilistic objects opens up possibilities for further investigation of its properties.}

\section{Introduction}

This paper solves exit problems for spectrally negative Markov Additive Processes (MAPs) and their reflections.
Before entering our discussion on this subject we shall
simply begin by defining the class of processes we intend to work
with. Let $(\Omega,\mathcal F,\mathbf F,\p)$ be a filtered probability space, with standard (right-continuous and augmented) filtration $\mathbf F=\{\mathcal F_t:t\geq 0\}$.
On this probability space consider a real-valued \cadlag\ (right-continuous with left limits) process $X=\{X(t):t\geq 0\}$
and a right-continuous jump process $J=\{J(t):t\geq 0\}$ with a finite state space $E=\{1,\ldots,N\}$,
such that $(X,J)$ is adapted to the filtration $\mathbf F$.
The process $(X,J)$ is a MAP if,
given $\{J(t)=i\}$, the pair $(X(t+s)-X(t),J(t+s))$ is independent
of $\mathcal F_t$ and has the same law as $(X(s)-X(0),J(s))$ given
$\{J(0)=i\}$ for all $s,t\geq 0$ and $i\in E$.
It is common to say that $X$ is an additive component and $J$ is a background process
representing the environment. Importantly, a MAP has a very special structure, which we reveal in the following. It is immediate that $J$ is a Markov chain. Furthermore, $X$ evolves as some spectrally negative \levy process $X_i$ while $J=i$.
In addition, a transition of $J$ from $i$ to $j\neq i$ triggers a jump of $X$ distributed as $U_{ij}\leq 0$, where $i,j\in E$. All the above components are independent.
This structure explains the other commonly used name for a MAP -
`Markov-modulated \levy process'.

The use of MAPs is widespread,
making it a classical model in applied probability with a variety of
application areas, such as queues, insurance risk, inventories, data
communication, finance, environmental problems and so forth, see \cite[Ch. XI]{asmussen:apq}, \cite[Ch. 7]{prabhu}, \cite{lambda, time_rev, MMBM_2sided, asm_avram_pist, breuer, extremes_dieker, Rogers2, breuer_MMBM}.

Throughout this paper we will consider only spectrally negative MAPs, where
the additive component $X$ has no positive jumps. We exclude the trivial cases when the paths of $X$ are a.s.\ monotone. It is noted that phase-type distributions fit naturally into the framework of MAPs. Positive phase-type jumps can be incorporated into the model using a standard trick, see e.g.~\cite{asm_avram_pist}. This is achieved by enlarging the number of states of the background process and replacing phase-type jumps by linear stretches of unit slope.

One of the main contribution of this paper is in identification, in Theorem \ref{thm:main_scale}, of the scale matrix~$W$
which plays the same role as the scale function in the theory of fluctuations of \levy processes (for a beautiful review on this subject see
\cite{KKR}).
Formally, the scale matrix is characterized by its Laplace transform:
\[\int_0^\infty e^{-\a x}W(x)\D x=F(\a)^{-1},\]
where $F(\alpha)$ is a matrix analogue of the Laplace exponent of a \levy process:
\[\e(e^{\a X(t)};J(t)=j|J(0)=i)=\left(e^{F(\a)t}\right)_{ij}.\]
This result generalizes \cite[Thm. 3]{palmowski_fluctuations}, where
only existence of a scale matrix was proven. In the case of spectrally negative \levy process this result
follows contributions of \cite{Zolotarev,
Takacs, Emery, Bingham, Rogers2, Bertoin6a, Bertoin7}, see also \cite{kyprianou, bertoin} for an overview.

In the basis of the above transform lies a probabilistic construction of the scale matrix~$W(x)$. It is given in~(\ref{eq:W}), and involves the first hitting time of a level $-x$.
It can be seen as a counterpart of the representation of the scale function of a \levy process in terms of the potential density at~$-x$, see~\cite[Thm. 1(ii)]{Pistorius2}. It turns out that our representation can be written in a more concise way:
\[W(x)=e^{-\Lambda x}\eL(x),\]
where $\Lambda$ is a transition rate matrix of the Markov chain associated with the first passage, and $\eL(x)$ is a matrix of expected occupation times at $0$ up to the first passage time over~$x$. This allows to study $W(x)$ by analyzing $\eL(x)$, which has a very nice probabilistic interpretation.
For example, we can immediately deduce that the entries of the matrix $e^{\Lambda x}W(x)$ are non-negative, increasing functions.
Considering the domain of \levy processes, it is noted that the function $\eL(x)$ plays an important role in the study of local times, see~\cite[Ch. V.3]{bertoin}. Furthermore, \cite[Lem. V.11]{bertoin} determines this function as a doubly indefinite integral, which opens possibilities for new numerical identification of the scale function. We give some further comments on these issues in Remark~\ref{link}.

It is well-known in the domain of \levy processes that the intensity of the Poisson process (indexed by local time) counting the number of excursions with height exceeding $a$ is given by $W'_+(a)/W(a)$, see \cite[p. 195]{bertoin}. This result is very important for analyzing the smoothness of the scale function, see \cite[Lem. 2.3]{Mladen}. We generalize this result to MAPs in Theorem \ref{thm:derivatives}.

Furthermore, we solve exit problems for $X$, as well as $-X$, reflected at their infima. The corresponding results are given in Theorem~\ref{thm:reflected} and Theorem~\ref{thm:reflected2}.
They generalize results in \cite{avram_kypr_pist, Yor, pist_exit} where spectrally one-sided \levy processes were analyzed.
The solutions to these exit problems contain the second scale matrix $Z$ defined formally in (\ref{matrixZ}).

Finally, we propose new, unified technique for deriving
fluctuation identities for MAPs (and \levy process in particular) and their reflections. This technique is based on the properties of occupation times jointly with
occupation density formula. The main proofs in this paper rely on these arguments which,
we believe, even in the context of spectrally one-sided \levy processes are new.
For \levy processes this technique complements the use of martingales (see \cite{kyprianoupalmowski, Yor, avram_kypr_pist}), potential densities (see \cite{Pistorius2}), and excursion calculations (see \cite{Doney, pist_exit}).

The paper is organized as follows. In Section~\ref{sec:prelim} we recall some basic definitions and properties of MAPs. In Section \ref{sec:main} we present our main results.
In Section \ref{sec:occ_density} we introduce basic facts concerning occupation densities for MAPs.
In Section \ref{sec:scale} we identify the scale matrix proving Theorem
\ref{thm:main_scale}. In Section \ref{sec:2_sided_reflection}
we focus on some identities for two-sided reflection of a MAP. These identities are fundamental in solving exit problems for one-sided reflections, which are
analyzed in Section \ref{sec:further}. Finally, Appendix contains a few technical lemmas.


\section{Preliminaries}\label{sec:prelim}

Let $(X,J)$ be a spectrally negative MAP.
Let us use $\p_{x,i}$ with $x\in\R,i\in E$ to denote the law of $(X,J)$ given
$\{X(0)=x,J(0)=i\}$. We often write $\p_i$ for $\p_{0,i}$.
As much as possible we shall prefer to work with
matrix notation. For a random variable $Y$ and (random) time $\tau$, we shall understand
$\e_x[Y; J(\tau)]$ as the matrix with $(i,j)$-th element $\e_{x,i}(Y\ind{J(\tau)=j})$, where $\ind{A}$ denotes the indicator function of an event $A$. We also write
$\p_{x}[A, J(\tau)]$ for a matrix with elements $\p_{x,i}(A,J(\tau)=j)$.

Recall that $X$ evolves as some \levy process $X_i$ when $J(t)=i$.
The Laplace exponent of $X_i$ is denoted through $\psi_i(\a),\a\geq 0$: $\e e^{\a X_i(t)}=e^{\psi_i(\a)t}$. In order to avoid technicalities we assume that none of $X_i$ is a process with non-increasing paths. That is every $X_i$ is a spectrally negative \levy process. This is a rather mild assumption, because a MAP can be time-changed so that such non-increasing stretches contribute to negative jumps; often the results concerning the original process can be recovered from this time-changed process. Essentially without loss of generality it is assumed that $J$ is
irreducible. Let $Q$ be its $N\times N$ transition rate matrix.
For all $\a\geq 0$ define an $N\times N$ matrix:
\[F(\a)=\diag(\psi_1(\a),\ldots,\psi_N(\a))+Q\circ
G(\a),\] where $G(\a)_{ij}=\e e^{\a U_{ij}}$ and $A\circ
B=(a_{ij}b_{ij})$ stands for entry-wise (Hadamard) matrix product.
The matrix-valued function $F(\a)$ is referred to as the
`matrix exponent' of a MAP. It characterizes the law of $(X,J)$, and can be seen as a multi-dimensional analogue of the Laplace exponent of a \levy process.

It is convenient to work with killed processes. We enlarge the state space of $J$ to $E\cup\{\partial\}$, where $\partial$ is an additional absorbing state. Furthermore,
we put $X=\infty$ whenever $J=\partial$. It is known that a MAP satisfies associated strong Markov property, that is, we can replace $t$ with a stopping time $T$ in the definition of a MAP.

As a consequence of Perron-Frobenius theory $F(\a)$ has a real simple eigenvalue $k(\a)$, which is larger than the real part of any other eigenvalue. Using the standard facts from matrix analysis, see~\cite{matrix_analysis}, one can show that $k(0)\leq 0$; it is 0 if and only if $Q\1=\0$ (the case of no killing). In the case of no killing the real right derivative $k'(0)$ gives the asymptotic drift:
\[k'(0)=\lim_{t\rightarrow\infty} X(t)/t\quad \p_i\text{-a.s.\ for all }i\in E.\]

The first passage time of $X$ over level $\pm x$, where $x\geq 0$, is defined through
\[\tau^\pm_x=\inf\{t\geq 0: \pm X(t)> x\}.\]
Observe that $X(\tau^+_x)=x$,
because of the absence of positive jumps. The strong Markov
property of $X$
implies that $(\tau^+_x,J(\tau^+_x)),x\geq 0$ is a MAP killed at $\oX=\sup\{X(t):t\geq 0\}$.
We will often use the transition rate matrix
$\Lambda$ of the Markov chain $J(\tau^+_x),x\geq 0$:
\begin{equation}\label{lambda}\p[J(\tau_x^+)]=e^{\Lambda x}.\end{equation}

The matrix $\Lambda$ is closely related to the matrix exponent $F(\a)$. In particular, the non-zero eigenvalues of $-\Lambda$ coincide with the zeros of $\det(F(\a))$ in $\Cpos=\{z\in \C:\Re(z)>0\}$, see~\cite[Theorem 1]{lambda}. In addition, $\Lambda$ has a zero eigenvalue if and only if $\oX=\infty$ a.s., which is equivalent to $Q\1=\0$ (no killing) and $k'(0)\geq 0$.
For discussion concerning $\Lambda$ and its identification through a certain matrix integral equation
see \cite{rogers, asmussen_fluid_flow, pistorius, miyazawa, extremes_dieker, breuer, lambda, time_rev}.

\begin{remark}
Let us point out some advantages of working with killed processes.
Let $e_q$ be an independent exponential random variable of rate $q>0$. Suppose we send $J$ to the absorbing state $\partial$ at $t=e_q$ (it could have been killed before).
By doing so we get a Markov chain with transition rate matrix $Q-q\matI$, which leads to a MAP with the matrix exponent $F(\a)-q\matI$. Indeed, \[\e[e^{\a X(t)};t<e_q,J(t)]=e^{-qt}\e [e^{\a X(t)};J(t)]=e^{(F(\a)-q\matI)t}.\]
Similarly,
\[\e[e^{-q\tau_x^+};J(\tau_x^+)]=\p[\tau_x^+<e_q,J(\tau_x^+)]=e^{\Lambda^qx},\]
where $\Lambda^q$ corresponds to the matrix exponent $F^q(\a)=F(\a)-q\matI$.
It is then clear that $-\Lambda^q$ reduces to the right inverse $\Phi(q)$ of the Laplace exponent in the case of a \levy process. Observe that a matrix of probabilities $\p[J(\tau_x^+)]$ becomes a matrix of Laplace transforms $\e[e^{-q\tau_x^+};J(\tau_x^+)]$ if one superimposes exponential killing of rate $q>0$.
Throughout this work we keep killing implicit, which greatly simplifies the presentation.
\end{remark}

\section{The main results}\label{sec:main}
Let us present our main results. We stress that the following statements are given in their concise form. That is, one can always superimpose exponential killing of rate $q>0$ to add the random time appearing in $J(\cdot)$ to the transform, see also comments following Theorem~\ref{thm:main_scale}.
The main result of this paper concerns the two-sided upward problem (regarding \levy processes see \cite[Thm. VII.8]{bertoin}, \cite[Thm. 8.1]{kyprianou} and the references given in the latter book).

\begin{theorem}\label{thm:main_scale}
There exists a unique
continuous function $W:[0,\infty)\rightarrow\R^{N\times N}$
such that $W(x)$ is invertible for all $x>0$,
\[\p[\tau_a^+<\tau^-_b,J(\tau_a^+)]=W(b)W(a+b)^{-1}\text{ for all }a,b\geq 0\text{ with }a+b>0,\]
and
\[\int_0^\infty e^{-\a x}W(x)\D x=F(\a)^{-1}\]
for all $\a>\eta=\max\{\Re(z):z\in\C,\det(F(z))=0\}$.
In addition,
\begin{equation}\label{localrepr}W(x)=e^{-\Lambda x}\eL(x),\end{equation} where
$\eL(x)$ is a matrix of expected occupation times at $0$ up to the first passage over $x$ defined in~(\ref{eq:H}).
\end{theorem}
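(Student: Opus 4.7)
My plan is to define $\eL(x)$ via the occupation density formula of Section~\ref{sec:occ_density}, set $W(x):=e^{-\Lambda x}\eL(x)$, and then verify the three assertions of the theorem — the two-sided exit identity, the Laplace-transform characterisation, and uniqueness — in that order. Continuity of $\eL$ (hence of $W$) and invertibility of $W(x)$ for $x>0$ are obtained along the way.

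The crucial step is the exit identity $P(a,b)=W(b)W(a+b)^{-1}$, where $P(a,b):=\p[\tau_a^+<\tau^-_b,J(\tau_a^+)]$. My first step is the auxiliary fact
\[\e_y[L^0(\tau_a^+);J(\tau_a^+)]=e^{-\Lambda y}\,\eL(a)\qquad(y\le 0),\]
which uses the absence of positive jumps: starting from $y\le 0$ the process hits $0$ continuously without overshoot, so by strong Markov at this hitting time both the distribution of $J$ and the subsequent occupation at $0$ factorise through $e^{-\Lambda y}$ and $\eL(a)$. Equipped with this, I apply the strong Markov property twice — once at $\tau_a^+$ and once at an appropriate downward passage time — to produce a system of matrix equations relating $\eL(a+b)$, $\eL(a)$, $\eL(b)$, $P(a,b)$, and a downward hitting matrix that captures the ``first return to $0$ after exiting below''. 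Combining these relations and substituting $W(\cdot)=e^{-\Lambda\cdot}\eL(\cdot)$ eliminates the hitting matrix and collapses the system to $P(a,b)=W(b)W(a+b)^{-1}$; invertibility of $W(x)$ for $x>0$ is then read off by taking $b=0$.

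For the Laplace transform, I would evaluate $\int_0^\infty e^{-\alpha x}W(x)\,dx$ directly from the representation, using the occupation density formula to rewrite this integral as a matrix exponential functional of $(X,J)$ absorbed at its supremum; the semigroup identity $\e[e^{\alpha X(t)};J(t)]=e^{F(\alpha)t}$ together with the spectral characterisation of $\Lambda$ recalled in Section~\ref{sec:prelim} then collapses the expression to $F(\alpha)^{-1}$ on $\alpha>\eta$. Uniqueness is immediate, since the Laplace transform determines a continuous matrix-valued function on $[0,\infty)$.

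The main obstacle is the exit-identity step: tracking the final state $J(\tau_a^+)$ and the accumulated occupation density at $0$ across two strong-Markov decompositions, and verifying that the algebraic manipulations of the resulting system actually produce the clean product $W(b)W(a+b)^{-1}$. Because $\Lambda$ and $\eL(x)$ do not commute, the cancellation that eliminates the downward hitting matrix is not a formal exponential rearrangement but must be driven by the shift-invariance and spectral negativity of the MAP. A related subtlety is that occupation density at single points need not coincide with Lebesgue time at those points, so the argument must be phrased entirely in terms of the occupation-density framework of Section~\ref{sec:occ_density} rather than a naive local-time argument.
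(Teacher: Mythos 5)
Your overall route is essentially the paper's: define $\eL(x)$, set $W(x)=e^{-\Lambda x}\eL(x)$, obtain the exit identity by writing strong-Markov decompositions and eliminating a downward hitting matrix, and get the transform from the occupation density formula (the paper decomposes the unrestricted quantities $\p[J(\tau_a^+)]=e^{\Lambda a}$ and $\p[J(\tx{-b})]$ and multiplies by $\eL$ only at the end, which is the same elimination you describe in occupation-weighted form). The most concrete gap is invertibility. You cannot ``read it off by taking $b=0$'': at $b=0$ the identity reads $\p[\tau_a^+<\tau_0^-,J(\tau_a^+)]=W(0)W(a)^{-1}$, which already presupposes what you want, and moreover $W(0)=\eL(0)$ is the \emph{zero} matrix whenever every $X_i$ has unbounded variation, see~(\ref{eq:L0}), so the $b=0$ case carries no information. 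Worse, the step in which your system ``collapses'' to $P(a,b)=W(b)W(a+b)^{-1}$ itself requires inverting $W(a+b)$ (equivalently $\eL(a+b)$); without a separate argument you only obtain $P(a,b)W(a+b)=W(b)$, which is exactly how the paper states~(\ref{eq:exit}). The paper then proves invertibility independently: Proposition~\ref{prop:occ_density_MAP} gives $\eL(x+y)=\eL(x)+\p[J(\tau_x^+)]\,\p[\tx{-x}<\tau^+_y,J(\tx{-x})]\,\eL(x+y)$, the matrix $\matI-\p[J(\tau_x^+)]\p[\tx{-x}<\tau^+_y,J(\tx{-x})]$ is strictly diagonally dominant, and a null vector of $\eL(x)$ would produce a null vector of $F(\a)^{-1}$, a contradiction. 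Some argument of this kind is missing from your plan.

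Two further points you pass over are where much of the paper's work sits. First, your downward decomposition must be anchored at a time when the position of $X$ is known; $\tau_b^-$ has an unknown overshoot, so one must first show that $\{\tau_a^+<\tau_b^-\}$ and $\{\tau_a^+<\tx{-b}\}$ coincide a.s.\ (the opening lemma of Section~\ref{sec:scale}, which rests on Proposition~\ref{prop:hit_pass}); your sketch silently replaces passage below $-b$ by hitting of $-b$, and your auxiliary fact likewise uses the (easier, but still not free) upward statement that hitting $0$ from below coincides a.s.\ with passing it. Second, the transform and the recurrent case: however you rewrite $\int_0^\infty e^{-\a x}W(x)\D x$ probabilistically, the resulting expectations converge only on a restricted set of $\a$ (in the paper, where $k(\a)<0$, i.e.\ under killing or transience), so reaching all $\a>\eta$ requires the analytic-continuation step of Lemma~\ref{lem:transform}; and when $Q\1=\0$ and $k'(0)=0$ the matrix $\eL$ is infinite, so both the occupation-based system and the transform computation break down and are recovered in the paper only by superimposing killing $q>0$ and letting $q\downarrow0$, with domination supplied by Lemmas~\ref{lem:L_transform} and~\ref{lem:app_bound} --- a step that is not a routine continuity-of-Laplace-transforms argument because the entries of $W$ need not be nonnegative. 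Your phrase ``collapses to $F(\a)^{-1}$ on $\a>\eta$'' is precisely the part that does not follow directly. The uniqueness remark is fine.
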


Theorem~\ref{thm:main_scale} is proven in Section~\ref{sec:scale}.
Recall that it is enough to superimpose exponential killing of rate $q>0$ to obtain the formula for $\e[e^{-q\tau_a^+};\tau_a^+<\tau^-_b,J(\tau_a^+)]$.
We define $\eL(x)$ and prove some properties of it in Section \ref{sec:occ_density}. Note that $\eL(x)$ tends to $\eL$, the matrix of expected occupation times at 0, as $x\rightarrow\infty$. We prove in Lemma \ref{lem:L_transform}
that $\eL$ has finite entries and is invertible unless $Q\1=\0$ and $k'(0)=0$ (the recurrent case).

The scale matrix can be given in an explicit form in certain cases, such as in the case of a Markov-modulated Brownian motion (MMBM), where some variance parameters are allowed to be $0$, see \cite{pist_jiang, MMBM_2sided, MMBM_bernardo}. This result can be used to find the steady-state buffer content in a fluid model driven by Markov sources, see
\cite{anick_mitra_sondhi} and a survey \cite{fluid_survey}. Furthermore, a spectrally negative MAP with all the jumps of phase-type can be embedded into an MMBM. The scale matrix of this MMBM can be easily converted to the scale matrix of the original process.
Many examples concerning scale functions of \levy processes can be found in \cite{kypr_hubalek}.

\begin{remark}\label{link}
In the context of spectrally negative \levy processes the representation (\ref{localrepr}) produces interesting links with known representations (see e.g.
\cite[eq. (2.26)]{KKR} and \cite[Thm. 1(ii)]{Pistorius2}).
To see it, recall that $\Lambda=-\Phi$ where $\Phi$ is the right-most zero of the Laplace exponent. In fact, $\Phi$ depends on the killing rate $q\geq 0$ which we keep implicit throughout this paper.
First we consider the representation of $W(x)$ in terms of the potential density $u(\cdot)$ of $X$ as given in~\cite{Pistorius2}. Namely, combining (i) and (ii) of~\cite[Thm. 1]{Pistorius2} we get for any $q>0$ the following
\[W(x)=e^{\Phi x}[u(0)-u(x)u(-x)/u(0)].\]
It is not difficult to show that for $q>0$ the expression in the brackets is exactly $\eL(x)$, see also~\cite[Eq. (V.18)]{bertoin}.
Another representation of $W(x)$ in terms of the potential measure of the descending ladder height process $\hat H(t)$ is given in~\cite[eq. (2.26)]{KKR}. Comparing it to (\ref{localrepr}) we get
\[\eL(x)=\int_0^x e^{-\Phi x}\int_0^\infty \D t\p(\hat H(t)\in \D y)\] for at least $q=0$.
It would be interesting to provide a direct probabilistic proof of this identity.
Some further identities involving $\eL(x)$ can be found in~\cite[Ch. V.3]{bertoin}

\end{remark}

Define now the reflection of $X$ at 0 (starting from an arbitrary $x\geq 0$):
$$Y(t)=X(t)+R(t),$$ where $R(t)=-(0\wedge\uX(t))$ is a regulator at $0$. Similarly define the reflection of the dual $-X$ at 0: $\hat Y(t)=-X(t)+\hat R(t)$, so that $\hat R(t)=(0\vee\oX(t))$.
First passage times of the reflected processes are given by
\begin{align*}&T_a=\inf\{t\geq 0:Y(t)>a\},&\hat T_a=\inf\{t\geq 0:\hat Y(t)>a\}.\end{align*}

In solving exit problems for the reflected processes the crucial is the second scale matrix:
\begin{equation}\label{matrixZ}
Z(\a,x)=e^{\a x}\left(\matI-\int_0^xe^{-\a y}W(y)\D yF(\a)\right)\text{ for }\a,x\geq 0.\end{equation}
Note that $Z(\a,x)$ is continuous in $x$ with
$Z(\a,0)=\matI$, and is analytic in $\a\in\Cpos$. In the case of a
single background state and independent killing of rate $q\geq 0$ we obtain $Z^q(0,x)=1+q\int_0^xW^q(y)\D y$,
which is a common definition of the $Z$ function corresponding to a
spectrally negative \levy process, see, for example,
\cite[p. 214]{kyprianou}.
The next theorem identifies, for a reflected process $Y$, the joint transform of the first passage time  $T_a$ and the value $R(T_a)$ of the regulator at this time.

\begin{theorem}\label{thm:reflected}For all $\a\geq 0$ and $a>0$ it holds that $Z(\a,a)$ is invertible. Furthermore,
\[\e_{x}[e^{-\a R(T_a)};J(T_a)]=Z(\a,x)Z(\a,a)^{-1},\]
where $x\in [0,a]$.
\end{theorem}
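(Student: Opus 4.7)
The plan is to reduce the transform on the reflected process to quantities already expressible through the scale matrices $W$ and $Z$ via the strong Markov property at the first exit of $X$ from $(0,a)$. Set $\Phi_x := \e_x[e^{-\a R(T_a)}; J(T_a)]$; the theorem claims $\Phi_x = Z(\a,x) Z(\a,a)^{-1}$ together with the invertibility of $Z(\a,a)$. For $x\in(0,a]$, condition on $\sigma = \tau^+_a \wedge \tau^-_0$. On $\{\tau^+_a < \tau^-_0\}$ no regulation has occurred, $T_a = \tau^+_a$ and $R(T_a) = 0$; a translation of Theorem~\ref{thm:main_scale} contributes $W(x)W(a)^{-1}$. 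On $\{\tau^-_0 < \tau^+_a\}$ the process lands at $X(\tau^-_0)\le 0$, the regulator instantly equals the undershoot $-X(\tau^-_0)$, so $Y(\tau^-_0) = 0$, and the strong Markov property restarts a fresh copy of the reflected process from $(0, J(\tau^-_0))$. The total regulator at $T_a$ splits additively across $\tau^-_0$, yielding
\[\Phi_x = W(x)W(a)^{-1} + M(x)\,\Phi_0,\qquad M(x):=\e_x\bigl[e^{\a X(\tau^-_0)};\tau^-_0 < \tau^+_a;J(\tau^-_0)\bigr].\]

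The decisive step is the MAP two-sided downward exit identity $M(x) = Z(\a,x) - W(x)W(a)^{-1}Z(\a,a)$. I would derive it by optional stopping of the matrix martingale $e^{\a X(t)}e^{-F(\a)t}$ at $\sigma$: the $\{\tau^+_a < \tau^-_0\}$-side factors through $X(\tau^+_a) = a$ and the upward transform $W(x)W(a)^{-1}$ already furnished by Theorem~\ref{thm:main_scale}, producing the subtracted piece $W(x)W(a)^{-1}Z(\a,a)$ after reorganising via the integral definition of $Z$, while the initial value at $x$ contributes $Z(\a,x)$. Alternatively this identity can be assembled directly from the two-sided reflection formulas proved in Section~\ref{sec:2_sided_reflection}, which is the route consistent with the paper's occupation-density methodology.

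Substituting back, $\Phi_x = Z(\a,x)\Phi_0 + W(x)W(a)^{-1}(\matI - Z(\a,a)\Phi_0)$. Sending $x\darr 0$ (using $Z(\a,0)=\matI$ and continuity of $\Phi_x$ at $0$) gives $W(0)W(a)^{-1}(\matI - Z(\a,a)\Phi_0) = 0$. In the bounded-variation case $W(0)$ is invertible, which forces $Z(\a,a)\Phi_0 = \matI$, establishing simultaneously that $Z(\a,a)$ is invertible and $\Phi_0 = Z(\a,a)^{-1}$; back-substitution then delivers $\Phi_x = Z(\a,x)Z(\a,a)^{-1}$, with $\Phi_a = \matI$ automatic. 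The main obstacle is the unbounded-variation case, where $W(0) = 0$ and the boundary relation at $x = 0$ degenerates; here one needs an auxiliary argument---for instance differentiating the decomposition in $x$ at $0^+$ and exploiting the behaviour of $W$ near zero, or, more cleanly, invoking a two-sided reflection identity from Section~\ref{sec:2_sided_reflection} that pins down $\Phi_0$ directly---to secure $\Phi_0 = Z(\a,a)^{-1}$ and the invertibility of $Z(\a,a)$ simultaneously.
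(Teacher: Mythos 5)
Your argument has a genuine gap, and it sits exactly at the step you call ``decisive''. The identity $M(x)=\e_x[e^{\a X(\tau_0^-)};\tau_0^-<\tau_a^+,J(\tau_0^-)]=Z(\a,x)-W(x)W(a)^{-1}Z(\a,a)$ is precisely the Corollary that the paper \emph{deduces from} Theorem~\ref{thm:reflected} (together with Theorem~\ref{thm:main_scale}), via the very same strong-Markov decomposition $\Phi_x=W(x)W(a)^{-1}+M(x)\Phi_0$ that you write down; so to use it here you must supply an independent proof, and neither of your two suggestions does. The optional-stopping sketch does not go through as stated for MAPs: the natural martingale is the row vector $\be_{J(t)}^\t e^{-F(\a)t}e^{\a X(t)}$, and at $\sigma=\tau_a^+\wedge\tau_0^-$ the matrix factor $e^{-F(\a)\sigma}$ does not decouple from $(X(\sigma),J(\sigma))$ — you obtain joint space--time transforms with the killing argument tied to $F(\a)$, not the quantities $\p[\tau_a^+<\tau_0^-,J(\tau_a^+)]$ and $M(x)$ with independent (implicit) killing, so there is no ``reorganising via the integral definition of $Z$'' without a substantial two-parameter continuation argument (already delicate for L\'evy processes, more so here). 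Your fallback — ``assemble the identity from Section~\ref{sec:2_sided_reflection}'' — is circular in effect: identity (\ref{eq:2_2}) of Theorem~\ref{thm:2_sided} \emph{is} Theorem~\ref{thm:reflected}, since $(H(t)+a,\Rl(t))$ up to $\rho_0$ coincides with $(Y(t),R(t))$ up to $T_a$; once you invoke it, the entire conditioning apparatus is superfluous, and that one-line coupling observation is exactly the paper's proof.

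The second gap is the boundary step. Your determination of $\Phi_0$ and of the invertibility of $Z(\a,a)$ from $W(0)W(a)^{-1}(\matI-Z(\a,a)\Phi_0)=\0$ requires $W(0)=\eL(0)$ to be invertible, which by (\ref{eq:L0}) holds only when \emph{every} $X_i$ is of bounded variation; in the mixed case $W(0)$ is a singular nonzero diagonal matrix, and in the unbounded-variation case it vanishes, so the relation at $x=0$ carries no information. You acknowledge this, but the auxiliary argument you would need (behaviour of $W$ near $0$, or a direct identification of $\Phi_0$) is not supplied, and again the clean way to pin down $\Phi_0$ and the invertibility of $Z(\a,a)$ is Theorem~\ref{thm:2_sided}, whose proof via the occupation density of $(X(\rho_r),J(\rho_r))$ is where the paper does the real work. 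As it stands your proposal proves the theorem only in the all-bounded-variation case, conditional on an unproven key identity.
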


This theorem immediately results in the corollary, which solves downward two-sided exit problem for $X$ (see again \cite[Thm. 8.1]{kyprianou}.

\begin{corollary}For any $\a\geq 0,a>0$ and $x\in [0,a]$ it holds that
\[\e_{x}[e^{\a X(\tau_0^-)};\tau_0^-<\tau_a^+,J(\tau_0^-)]=Z(\a,x)-W(x)W(a)^{-1}Z(\a,a).\]
\end{corollary}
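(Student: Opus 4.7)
The plan is to derive this as a direct consequence of Theorems~\ref{thm:main_scale} and~\ref{thm:reflected} via a strong Markov decomposition at $\tau_0^-$. Starting the reflected process $Y$ from $x\in[0,a]$, the regulator $R$ stays at zero throughout $[0,\tau_0^-)$, so there are two cases. On the event $\{\tau_a^+<\tau_0^-\}$ we have $T_a=\tau_a^+$ and $R(T_a)=0$. On the complementary event $\{\tau_0^-<\tau_a^+\}$, spectral negativity of $X$ forces $\uX(\tau_0^-)=X(\tau_0^-)\le 0$ (the running infimum, equal to $0$ on $[0,\tau_0^-)$, jumps down to $X(\tau_0^-)$), so that $R(\tau_0^-)=-X(\tau_0^-)$ and $Y(\tau_0^-)=0$.

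Next I would apply the strong Markov property of $(X,J)$ at $\tau_0^-$. On $\{\tau_0^-<\tau_a^+\}$, the reflected process restarts from $(0,J(\tau_0^-))$, and the regulator splits additively as $R(T_a)=R(\tau_0^-)+\tilde R(\tilde T_a)$, where $(\tilde R,\tilde T_a)$ has the law of the regulator and first passage over $a$ of an independent reflected copy started from $0$. Writing this in the matrix notation of the paper and using $e^{-\a R(\tau_0^-)}=e^{\a X(\tau_0^-)}$ on the second event gives
\[
\e_x[e^{-\a R(T_a)};J(T_a)] = \p_x[\tau_a^+<\tau_0^-, J(\tau_a^+)] + \e_x[e^{\a X(\tau_0^-)};\tau_0^-<\tau_a^+, J(\tau_0^-)]\,\e_0[e^{-\a R(T_a)};J(T_a)].
\]

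Finally I would plug in the known factors. By Theorem~\ref{thm:reflected}, the left-hand side equals $Z(\a,x)Z(\a,a)^{-1}$ and the rightmost factor equals $Z(\a,0)Z(\a,a)^{-1}=Z(\a,a)^{-1}$ since $Z(\a,0)=\matI$. By Theorem~\ref{thm:main_scale}, after the spatial shift $X\mapsto X-x$ (which sends $\tau_a^+\mapsto\tau_{a-x}^+$ and $\tau_0^-\mapsto\tau_x^-$), one obtains $\p_x[\tau_a^+<\tau_0^-, J(\tau_a^+)]=W(x)W(a)^{-1}$. Multiplying the resulting identity on the right by the invertible matrix $Z(\a,a)$ and rearranging isolates the desired transform as $Z(\a,x)-W(x)W(a)^{-1}Z(\a,a)$.

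The only delicate pathwise ingredient is the identity $R(\tau_0^-)=-X(\tau_0^-)$, which is what converts the regulator exponential into $e^{\a X(\tau_0^-)}$. Everything else is matrix bookkeeping, and invertibility of $Z(\a,a)$ and $W(a)$ needed for the rearrangement is already ensured by the two theorems invoked; I do not anticipate any genuine obstacle.
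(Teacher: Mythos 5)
Your proposal is correct and follows essentially the same route as the paper: the paper also decomposes $\e_x[e^{-\a R(T_a)};J(T_a)]$ at $\tau_0^-$ via the strong Markov property (using $R(\tau_0^-)=-X(\tau_0^-)$ on $\{\tau_0^-<\tau_a^+\}$) and then substitutes the identities of Theorem~\ref{thm:main_scale} and Theorem~\ref{thm:reflected}. Your write-up merely makes explicit the pathwise details and the shift argument that the paper leaves implicit.
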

\begin{proof}
Using the law of total probability and the strong Markov property of $X$ write
\begin{align*}\e_{x}[e^{-\a R(T_a)};J(T_a)]=\p_{x}[\tau^+_a<\tau_0^-,J(\tau^+_a)]+\e_{x}[e^{\a X(\tau_0^-)};\tau^-_0<\tau^+_a,J(\tau_0^-)]\e[e^{-\a R(T_a)};J(T_a)].
\end{align*}
Use the identities of Theorem~\ref{thm:main_scale} and Theorem~\ref{thm:reflected} to complete the proof.
\end{proof}
Let us derive the identity for the first passage of $X$ over a negative level.

\begin{corollary}Assume it is not true that $Q\1=0$ and $k'(0)=0$.
Then for all $x,\a\geq 0$,
\[\e_{x}[e^{\a X(\tau_0^-)};J(\tau_0^-)]=Z(\a,x)-W(x)\eL^{-1}(\a\matI+\Lambda)^{-1}\eL F(\a).\]
If $\a$ is a zero of $\det(F(\a))$ then it is a zero of $\det(\a\matI+\Lambda)$ and the expression on the right should be interpreted in a limiting sense.
\end{corollary}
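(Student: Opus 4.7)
The plan is to derive the identity by letting $a\to\infty$ in the preceding corollary, so the key computation is to identify $\lim_{a\to\infty}W(a)^{-1}Z(\a,a)$ explicitly using the representation $W(x)=e^{-\Lambda x}\eL(x)$ from Theorem~\ref{thm:main_scale}.

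For the left-hand side, the absence of positive jumps gives $X(\tau_0^-)\leq 0$ on $\{\tau_0^-<\infty\}$, hence $e^{\a X(\tau_0^-)}\leq 1$. Because the paths of $X$ are \cadlag, $\tau_a^+\to\infty$ almost surely as $a\to\infty$, so the events $\{\tau_0^-<\tau_a^+\}$ increase to $\{\tau_0^-<\infty\}$ (the latter having positive probability is where the standing assumption of the corollary is used; in the excluded recurrent case $\eL$ and $\Lambda$ are not invertible). Dominated convergence then yields
\[\lim_{a\to\infty}\e_x[e^{\a X(\tau_0^-)};\tau_0^-<\tau_a^+,J(\tau_0^-)]=\e_x[e^{\a X(\tau_0^-)};J(\tau_0^-)].\]

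For the right-hand side I would first restrict to $\a>\eta$, where Theorem~\ref{thm:main_scale} guarantees $\int_0^\infty e^{-\a y}W(y)\,\D y\,F(\a)=\matI$. Writing the integral in the definition of $Z(\a,a)$ as this total minus a tail yields $Z(\a,a)=e^{\a a}\int_a^\infty e^{-\a y}W(y)\,\D y\,F(\a)$. Substituting $W(y)=e^{-\Lambda y}\eL(y)$ and $y=a+u$ and then premultiplying by $W(a)^{-1}=\eL(a)^{-1}e^{\Lambda a}$ gives
\[W(a)^{-1}Z(\a,a)=\eL(a)^{-1}\int_0^\infty e^{-(\a\matI+\Lambda)u}\eL(a+u)\,\D u\,F(\a).\]
By the correspondence between the non-zero eigenvalues of $-\Lambda$ and the zeros of $\det F$ in $\Cpos$ recalled in Section~\ref{sec:prelim}, for $\a>\eta$ every eigenvalue of $\a\matI+\Lambda$ has strictly positive real part, so $e^{-(\a\matI+\Lambda)u}$ decays exponentially in $u$ uniformly in $a$. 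Combined with $\eL(a+u)\to\eL$ and $\eL(a)^{-1}\to\eL^{-1}$ (and uniformly bounded entries of $\eL(a+u)$), dominated convergence produces
\[\lim_{a\to\infty}W(a)^{-1}Z(\a,a)=\eL^{-1}(\a\matI+\Lambda)^{-1}\eL F(\a),\]
and the identity follows for $\a>\eta$ from the previous corollary together with the LHS limit.

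To pass from $\a>\eta$ to all $\a\geq 0$ I would invoke analytic continuation: the LHS is bounded and analytic in $\{\Re\a>0\}$, while the RHS is a meromorphic matrix function of $\a$ whose only possible poles on the positive real axis occur at zeros of $\det(\a\matI+\Lambda)$; by the spectral correspondence these are exactly the non-negative real zeros of $\det F(\a)$, and the trailing factor $F(\a)$ removes the singularity in the limiting sense stated. The main obstacle I expect is precisely this cancellation: one must verify that $(\a\matI+\Lambda)^{-1}\eL F(\a)$ has a finite limit as $\a$ approaches a common zero of $\det(\a\matI+\Lambda)$ and $\det F(\a)$, which requires exploiting the alignment between the eigenvectors of $-\Lambda$ and the right kernel of $F$ at such points (and, in case of multiple eigenvalues, the Jordan structure of $\Lambda$).
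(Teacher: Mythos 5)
Your proposal is correct and follows essentially the same route as the paper: take $a\to\infty$ in the preceding corollary, rewrite $Z(\a,a)$ as the tail $\int_0^\infty e^{-\a y}W(y+a)\,\D y\,F(\a)$ for $\a>\eta$, use $W(a)=e^{-\Lambda a}\eL(a)$ and dominated convergence to get $\lim_{a\to\infty}W(a)^{-1}Z(\a,a)=\eL^{-1}(\a\matI+\Lambda)^{-1}\eL F(\a)$, then conclude by analytic continuation. Your extra remarks on the left-hand limit and on the cancellation at common zeros of $\det F$ and $\det(\a\matI+\Lambda)$ are details the paper leaves implicit, and they do not change the argument.
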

\begin{proof}
According to Theorem~\ref{thm:main_scale} for $\a>\eta$ we have
\[Z(\a,a)=e^{\a a}\int_a^\infty e^{-\a y}W(y)\D yF(\a)=\int_0^\infty e^{-\a y}W(y+a)\D yF(\a).\]
Recall that $W(a)=e^{-\Lambda a}\eL(a)$. Thus:
\[W(a)^{-1}Z(\a,a)=\eL(a)^{-1}\int_0^\infty e^{(-\a\matI-\Lambda)y}\eL(a+y)\D yF(\a).\]
Applying the dominated convergence theorem we derive
\[\lim_{a\rightarrow\infty}W(a)^{-1}Z(\a,a)=\eL^{-1}\int_0^\infty e^{(-\a\matI-\Lambda)y}\D y \eL F(\a)\]
which proves the assertion of the theorem for $\a>\eta$. Analytic continuation completes the proof.
\end{proof}

Assume that $X(0)=0$. Then $\hat R(t)=\oX(t)$ is the running supremum of $X$.
Consider the process $-\hat Y(t)=X(t)-\oX(t)$ depicting excursions of $X$ from its supremum.
 Suppose we kill the Markov chain $\{J(\tau^+_x),x\geq 0\}$
 upon arrival of the first excursion (from the maximum) exceeding a certain height $a>0$. Note that this excursion arrives at $\zeta_a=\hat R(\hat T_a)$ where
 we measure (local) time along the $x$ coordinate.
 The strong Markov property of $X$ shows that this killed Markov chain is again a Markov chain. Let us denote its transition rate matrix by $\Lambda_{[0,a]}$:
 \begin{align}\label{eq:Lambda_a}\p[x<\zeta_a,J(\tau_x^+)]=e^{\Lambda_{[0,a]}x}\text{ for all }x\geq 0.\end{align}
Similarly, one can kill $J(\tau_x^+)$ upon arrival of the first excursion of height at least $a$, where $a>0$. The corresponding transition probability matrix is denoted by $\Lambda_{[0,a)}$. These two transition probability matrices are closely related to the scale matrix $W$ and its one-sided derivatives.
\begin{theorem}\label{thm:derivatives}
For all $a>0$ the limits
\begin{align*}&W'_+(a)=\lim_{\epsilon\downarrow 0}(W(a+\epsilon)-W(a))/\epsilon, &
W'_-(a)=\lim_{\epsilon\downarrow 0}(W(a)-W(a-\epsilon))/\epsilon
\end{align*}
 exist. In addition, $\Lambda_{[0,a]}=W'_+(a)W(a)^{-1}$ and $\Lambda_{[0,a)}=W'_-(a)W(a)^{-1}$.
\end{theorem}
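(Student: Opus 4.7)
The plan is to compute $\Lambda_{[0,a]}$ as the right derivative at $x=0$ of the matrix semigroup $x\mapsto e^{\Lambda_{[0,a]}x}$, by matching it against an $x$-differentiable expression coming from Theorem~\ref{thm:main_scale}. Both the existence of $W'_+(a)$ and the identification of $\Lambda_{[0,a]}$ will emerge simultaneously from this comparison.

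The crucial approximation, for small $\epsilon>0$ and $X(0)=0$, is
\[
\p[\tau_\epsilon^+ < \zeta_a,\,J(\tau_\epsilon^+)]
\;=\;\p[\tau_\epsilon^+ < \tau_a^-,\,J(\tau_\epsilon^+)]\;+\;O(\epsilon^2),
\]
whose right-hand side equals $W(a)W(a+\epsilon)^{-1}$ by Theorem~\ref{thm:main_scale}. On $[0,\tau_\epsilon^+]$ the running supremum $\oX$ takes values in $[0,\epsilon]$, so an excursion of depth exceeding $a$ forces $X$ below $\oX-a\in[-a,-a+\epsilon]$, whereas $\{\tau_\epsilon^+<\tau_a^-\}$ only requires $X\geq -a$. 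The symmetric difference of the two events therefore consists of paths possessing an excursion of depth in the narrow band $(a,a+\epsilon]$; by the Markov-modulated Poisson structure of excursions from the supremum, the expected number of such excursions during a supremum window of length $\epsilon$ is $O(\epsilon^2)$ (the intensity in the band is $O(\epsilon)$ and the window has width $O(\epsilon)$). Expanding $e^{\Lambda_{[0,a]}\epsilon}=\matI+\epsilon\,\Lambda_{[0,a]}+o(\epsilon)$ and rearranging, then dividing by $\epsilon$ and sending $\epsilon\downarrow 0$, forces the difference quotient $[W(a+\epsilon)-W(a)]/\epsilon$ to converge --- which both establishes existence of $W'_+(a)$ and produces the formula $\Lambda_{[0,a]}=W'_+(a)W(a)^{-1}$.

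The left derivative $W'_-(a)$ and the matrix $\Lambda_{[0,a)}$ are obtained by the same recipe with ``no excursion of depth at least $a$'' replacing ``no excursion of depth exceeding $a$''; the comparison is now $\p[\tau_\epsilon^+<\tau_{a-\epsilon}^-,\,J(\tau_\epsilon^+)]=W(a-\epsilon)W(a)^{-1}$ via Theorem~\ref{thm:main_scale}, which upon differentiation yields $W'_-(a)$. The main obstacle throughout is making the $O(\epsilon^2)$ bound rigorous: this rests on a Markov-modulated version of It\^o's excursion theory for the reflected process $\oX-X$, together with enough regularity of the excursion-depth intensity matrix in the threshold parameter (local bounded variation suffices). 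Once these ingredients are in place, the remainder of the proof is a routine matrix differentiation.
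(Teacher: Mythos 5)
Your overall architecture is the right one and is in fact the same as the paper's: compare, over a small local-time window $\epsilon$, the killed first-passage semigroup $e^{\Lambda_{[0,a]}\epsilon}$ from~(\ref{eq:Lambda_a}) with the two-sided exit matrix $\p[\tau_\epsilon^+<\tau_a^-,J(\tau_\epsilon^+)]=W(a)W(a+\epsilon)^{-1}$ supplied by Theorem~\ref{thm:main_scale}. The genuine gap is the error estimate that is supposed to link the two. You claim the symmetric difference of the events has probability $O(\epsilon^2)$, resting on the assertion that the excursion-height intensity of the band $(a,a+\epsilon]$ is $O(\epsilon)$, and you suggest that ``local bounded variation'' of the intensity justifies this. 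It does not: $a\mapsto\Lambda_{[0,a]}$ (equivalently, the tail of the excursion-height measure) is monotone, hence automatically of locally bounded variation, yet its increment over $(a,a+\epsilon]$ need not be $O(\epsilon)$ --- that would amount to local Lipschitz continuity of $a\mapsto W'_+(a)W(a)^{-1}$, i.e.\ to regularity of $W'$ which is not available a priori and is essentially of the same nature as what the theorem is trying to prove (the theorem even allows $W'_+\neq W'_-$, so the height measure can have atoms). So the pivotal quantitative step is unproven and in general false. Two smaller slips: the event should be $\{\epsilon<\zeta_a\}$, not $\{\tau_\epsilon^+<\zeta_a\}$, since $\zeta_a$ lives on the local-time scale; and careful bookkeeping of $W(a)W(a+\epsilon)^{-1}\approx\matI-\epsilon\,W'_+(a)W(a)^{-1}$ produces $\Lambda_{[0,a]}=-W'_+(a)W(a)^{-1}$, the sign actually used later in the proof of Theorem~\ref{thm:reflected2}.

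The gap is repairable, and the repair is precisely the paper's proof: use the inclusions you describe as one-sided monotone bounds rather than as an error term. For $0<\epsilon<\delta$ one has, entrywise,
\[e^{\Lambda_{[0,a]}\epsilon}=\p[\epsilon<\zeta_a,J(\tau_\epsilon^+)]\leq \p[\tau_\epsilon^+<\tau_a^-,J(\tau_\epsilon^+)]\leq \p[\epsilon<\zeta_{a+\delta},J(\tau_\epsilon^+)]=e^{\Lambda_{[0,a+\delta]}\epsilon}.\]
Subtracting $\matI$, dividing by $\epsilon$ and letting $\epsilon\downarrow 0$ squeezes the difference quotient of $W(a)W(a+\epsilon)^{-1}$ between $\Lambda_{[0,a]}$ and $\Lambda_{[0,a+\delta]}$; then $\delta\downarrow 0$ together with $\Lambda_{[0,a+\delta]}\rightarrow\Lambda_{[0,a]}$ (a purely qualitative right-continuity statement, deduced from the probabilistic interpretation, i.e.\ continuity from below of the height measure's tail) gives both the existence of $W'_+(a)$ and its identification, with no density or Lipschitz hypothesis on the excursion measure and no need to develop a Markov-modulated It\^o excursion theory beyond~(\ref{eq:Lambda_a}) and the strong Markov property. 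Equivalently, your own computation goes through once $O(\epsilon^2)$ is weakened to $o(\epsilon)$: all that is needed is that the band mass over $(a,a+\epsilon]$ tends to $0$, which is the same right-continuity. The left derivative is handled identically from the bounds $e^{\Lambda_{[0,a-\delta]}\epsilon}\leq \p[\tau_\epsilon^+<\tau_{a-\epsilon}^-,J(\tau_\epsilon^+)]\leq e^{\Lambda_{[0,a)}\epsilon}$, where your identity $\p[\tau_\epsilon^+<\tau_{a-\epsilon}^-,J(\tau_\epsilon^+)]=W(a-\epsilon)W(a)^{-1}$ is the correct middle term.
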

The above theorem shows that $W(a),a>0$ is differentiable if $\Lambda_{[0,a]}$ and $\Lambda_{[0,a)}$ coincide.
For this to hold it is sufficient to require that every
$X_i$ is of unbounded variation.
Theorem \ref{thm:derivatives} is strongly related to fluctuation theory and excursion measure
(compare with \cite[Lem. 2.3]{Mladen} and \cite[Lem.~1]{pist_exit}).

Let us finally, identify, for the reflected process $\hat Y$ with $\hat Y(0)=x\in[0,a]$, the joint transform of the first passage time $\hat T_a$, the value $\hat R(\hat T_a)$ of the regulator at this time, and the overshoot $\hat Y(\hat T_a)-a$.
In theory of \levy processes
this seminal result is due to
\cite{avram_kypr_pist} (see also e.g. \cite{Yor, pist_exit, maria2, bekker, boxma}).
\begin{theorem}\label{thm:reflected2}
It holds for all $\a,\theta\geq 0$ and $x\in[0,a]$ that
\begin{align*}
&\e_{\hat Y(0)=x} [e^{-\theta
\hat R(\hat T_a)-\a
(\hat Y(\hat T_a)-a)};J(\hat T_a)]\\&=Z(\a,a-x)+W(a-x)[W'_+(a)+\theta
W(a)]^{-1}[W(a)F(\a)-(\a+\theta)Z(\a,a)].
\end{align*}
\end{theorem}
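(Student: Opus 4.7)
The plan is to reduce the general case $x \in [0, a]$ to the case $x = 0$ by applying the strong Markov property of $\hat Y$ at the first hitting time of the reflecting barrier, and then to compute the boundary value $f(0) := \e_0[e^{-\theta \hat R(\hat T_a) - \a(\hat Y(\hat T_a) - a)}; J(\hat T_a)]$ via excursion theory at the supremum of $X$, using Theorem~\ref{thm:derivatives} to identify the generator of the ladder chain killed at the first excursion exceeding height $a$.

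For the reduction, observe that starting from $\hat Y(0) = x \in (0, a]$ the regulator stays at zero until $\tau_0 := \inf\{t \geq 0 : \hat Y(t) = 0\}$, while on $[0, \tau_0)$ the process $\hat Y$ coincides with $-X$ up to an additive translation. Splitting over $\{\hat T_a < \tau_0\}$ and $\{\tau_0 < \hat T_a\}$, the first contribution (on which $\hat R(\hat T_a) = 0$) reduces after translation to the downward two-sided exit transform furnished by the corollary to Theorem~\ref{thm:reflected}, giving $Z(\a, a-x) - W(a-x) W(a)^{-1} Z(\a, a)$, while the second equals $W(a-x) W(a)^{-1} f(0)$ by Theorem~\ref{thm:main_scale} (after the same translation) and strong Markov regeneration at $0$. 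This yields
\[f(x) = Z(\a, a-x) - W(a-x) W(a)^{-1} Z(\a, a) + W(a-x) W(a)^{-1} f(0),\]
and matching with the target formula reduces the theorem to verifying
\[f(0) = Z(\a, a) + W(a)[W'_+(a) + \theta W(a)]^{-1}[W(a) F(\a) - (\a + \theta) Z(\a, a)].\]

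For $f(0)$ I would argue via excursion theory. When $\hat Y(0) = 0$, the regulator satisfies $\hat R(t) = \oX(t)$, so $\hat R(\hat T_a) = \zeta_a$, the local time at the supremum at the first excursion of $X$ from $\oX$ whose depth exceeds $a$. By Theorem~\ref{thm:derivatives}, the killed ladder chain $\{J(\tau_x^+) : x < \zeta_a\}$ has sub-generator $\Lambda_{[0,a]} = W'_+(a) W(a)^{-1}$, and the compensation formula for the underlying Poisson point process of excursions gives
\[f(0) = W(a)[W'_+(a) + \theta W(a)]^{-1} \mathcal{K}(\a),\]
where $\mathcal{K}(\a)$ is the matrix recording the $e^{-\a u}$-weighted joint intensity, in the excursion measure, of the overshoot $u = \hat Y(\hat T_a) - a$ and the terminal state $J(\hat T_a)$ (per unit starting state of the killing excursion). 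To identify $\mathcal{K}(\a)$ in closed form I would take a shrinking-upper-barrier limit of the two-sided exit identity from the corollary to Theorem~\ref{thm:reflected}, using $Z(\a, 0) = \matI$ together with $\partial_x Z(\a, x) = \a Z(\a, x) - W(x) F(\a)$ (a direct consequence of the definition of $Z$) to match the limit against the excursion-measure representation; the factor $\a + \theta$ then arises from the combined contributions of the overshoot weight and the regulator rate inside the resolvent.

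The principal obstacle I anticipate is this last identification of $\mathcal{K}(\a)$: the shrinking-buffer limit must be carried out with a normalisation that is uniform across the bounded- and unbounded-variation cases for the components $X_i$ (where $W(0)$ may be positive or zero), and the interaction between the excursion measure and the resolvent of the ladder chain must be tracked carefully to recover the correct numerator $W(a) F(\a) - (\a + \theta) Z(\a, a)$. A complementary route I would pursue in parallel is to differentiate the Step~1 relation at $x = 0^+$ and match the derivative with a Skorokhod-type boundary condition for the reflected MAP at the origin; this should produce an independent equation for $f(0)$ that directly exhibits the denominator $W'_+(a) + \theta W(a)$ without requiring the full excursion-measure analysis.
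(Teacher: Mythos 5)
Your Step 1 is sound: splitting at the first visit of $\hat Y$ to the barrier, using Theorem~\ref{thm:main_scale} and the corollary of Theorem~\ref{thm:reflected}, correctly reduces the claim to the boundary identity
\[
f(0)=Z(\a,a)+W(a)\bigl[W'_+(a)+\theta W(a)\bigr]^{-1}\bigl[W(a)F(\a)-(\a+\theta)Z(\a,a)\bigr],
\]
and this matches the target formula for all $x\in[0,a]$. (The paper performs an equivalent strong Markov splitting on $\{\tau_0^+<\tau_a^-\}$, so this part is in the spirit of the original proof.)

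The genuine gap is that you never establish $f(0)$, which is the entire content of the theorem. Your excursion-theoretic ansatz $f(0)=W(a)[W'_+(a)+\theta W(a)]^{-1}\mathcal K(\a)$ is structurally plausible (Theorem~\ref{thm:derivatives} gives the resolvent factor), but the matrix $\mathcal K(\a)$ — the joint ``excursion intensity'' of the overshoot $\hat Y(\hat T_a)-a$ and the state $J(\hat T_a)$ for the first excursion of height exceeding $a$ — is exactly the quantity that must be computed, and you leave it as an acknowledged obstacle. The proposed shrinking-upper-barrier limit is not carried out, and making a compensation-formula argument rigorous in the MAP setting would require constructing a state-dependent excursion measure and local-time normalisation that the paper never introduces. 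Your fallback of differentiating the Step-1 relation at $x=0^+$ and imposing a ``Skorokhod-type boundary condition'' cannot close the gap either: differentiating that relation in $x$ only produces identities among $W'_+$, $\partial_xZ(\a,\cdot)=\a Z(\a,\cdot)-W(\cdot)F(\a)$ and the unknown $f(0)$ that hold for \emph{any} value of $f(0)$; the missing information is precisely the law of the overshoot of the killing excursion, which no amount of differentiation of the two-sided exit identities supplies. The paper obtains this input from a different source, which your proposal never invokes: the two-sided reflection on $[-a,0]$ of Section~\ref{sec:2_sided_reflection}. There, $(\Rl(\rho_r),J(\rho_r))$ is a Markov-modulated compound Poisson process whose matrix exponent is identified in Theorem~\ref{thm:2_sided} as $F^*(\a)=W(a)F(\a)Z(\a,a)^{-1}-\a\matI$, and Lemma~\ref{lem:MMCPP} converts this into the joint transform of the first jump epoch $\zeta$ and jump size $\Rl(\rho_\zeta)$; since before the first action of $\Rl$ one has $\Ru=\hat R$ and the first push-up at the lower barrier equals the overshoot $\hat Y(\hat T_a)-a$, this is exactly your $\mathcal K(\a)$ in disguise, and the stated numerator $W(a)F(\a)-(\a+\theta)Z(\a,a)$ drops out after multiplying through by $Z(\a,a)$. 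To repair your proof, replace the uncompleted excursion-measure identification by an appeal to Theorem~\ref{thm:2_sided} and Lemma~\ref{lem:MMCPP} (or reprove their content), and then your Step 1 finishes the argument.
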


\section{Occupation densities and local times}\label{sec:occ_density}
It has been noted above that occupation densities play a fundamental role in our construction of the scale matrix. In this section we first review the concept of occupation densities of a \levy process. Then we generalize it to the case of MAPs. Before we proceed let us define the \emph{first hitting time} of a level $x$:
\[\tx{x}=\inf\{t>0:X(t)=x\}.\] One can show following the steps of~\cite[Corollary~I.8]{bertoin} that $\tx{x}$ is a stopping time.

\subsection*{\levy processes}
Throughout this subsection we assume that $X$ is a \levy process with \levy measure $\nu$; we do not impose any restrictions on $X$ yet (it may have positive jumps in particular).
The \levy measure satisfies $\int_\R(1\wedge x^2)\nu(\D x)<\infty$. If $X$ is of bounded variation then
necessarily $\int_\R(1\wedge |x|)\nu(\D x)<\infty$. In this case $X$ equals in law to the process $dt+\sum_{0\leq s\leq t}\Delta(s)$, where $\Delta(t)$ is a Poisson point process with characteristic measure $\nu$, for some $d\in\R$ which is known as the \emph{drift coefficient}. Furthermore, a bounded variation process is a compound Poisson process (CPP) if and only if $\nu$ has finite mass and $d=0$.
These and other facts can be found in the book~\cite{bertoin}.

In the following we assume that
\begin{equation}\label{eq:assumption}\p(\tx{x}<\infty)>0\text{ for all }x\in\R.\end{equation}
For this to hold it is sufficient to require, see~\cite[Thm.~1]{kesten},
that $X$ is not a bounded variation process with $d=0$, $X$ is not monotone,
and $\nu(0,\infty)<\infty$.
The Assumption~(\ref{eq:assumption}) is sufficient, as stated by \cite[Prop.~II.16,
Thm.~V.1]{bertoin}, for the existence of the occupation density
$L(x,t)$ satisfying the \emph{occupation
density formula}
\begin{equation}\label{eq:occ_density}\int_0^tf(X(s))\D s=\int_\R f(x)L(x,t)\D x\end{equation}
for all measurable functions $f\geq 0$ a.s. and any fixed $t\geq 0$,
see~\cite[Eq.~V.(2)]{bertoin}. When $L(x,t)$ is seen as a function of $t$, we call it \emph{occupation time} at the level $x$.
Formally $L(x,t)$ could be defined in the following way.

Suppose that 0 is regular for itself. Then for every $x\in\R$
\[L^\epsilon(x,t)=\frac{1}{2\epsilon}\int_0^t\ind{|X(s)-x|<\epsilon}\D s\]
converges uniformly on compact intervals of time $t$, in $L^2(\p)$
as $\epsilon\downarrow 0$, see~\cite[Prop.~V.2]{bertoin}. This
limit is denoted through $L(x,t)$, which consequently is continuous
in $t$ a.s.

If 0 is irregular for itself then $X$ is of bounded variation, see~\cite[Thm.~8]{bretagnolle} and recall that (\ref{eq:assumption}) is true. In this case we define:
\[L(x,t)=|d|^{-1}N(x,t),\] where
$N(x,t)=\#\{s\in[0,t):X(s)=x\}$, see also~\cite{fitzsimmons}.

It can be shown that for each $x\in\R$, the occupation time $L(x,\cdot)$ is
an increasing $\mathcal F_t$-adapted process, which increases only
if $X=x$. Moreover, for every $y$ and every stopping time $T$ with
$X(T)=y$ on $\{T<\infty\}$, the shifted process
$L(x,T+t)-L(x,T),t\geq 0$ is independent of $\mathcal F_T$ under
$\p(\cdot|T<\infty)$, and has the same law as $L(x-y,t),t\geq 0$
under $\p$.
This property shows that if $L(0,\cdot)$ is continuous then it, in fact, coincides up to
a constant factor with the \emph{local time} of $X(t)$ at~0,
see~\cite[Prop.~V.4]{bertoin}.

Finally, the following proposition will be important later on.
\begin{proposition}\label{prop:hit_pass}
Assume that $X$ is not a CPP.
Then $\p(\tx{-x}<\tau_x^-)=0$ for any $x\geq 0$.
\end{proposition}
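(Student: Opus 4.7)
My plan is to apply the strong Markov property at the stopping time $\tx{-x}$ to factor $\p(\tx{-x}<\tau_x^-)$, and then show that under the assumption that $X$ is not a CPP at least one of the two factors must vanish.

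First observe that on $\{\tx{-x}<\tau_x^-\}$ one must have $X(s)\geq -x$ for all $s<\tx{-x}$, since otherwise $\tau_x^-<\tx{-x}$. Denote this last event by $B$. Conditional on $\{\tx{-x}<\infty\}$, the strong Markov property tells us that $\tilde X(t):=X(\tx{-x}+t)+x$ is an independent copy of $X$ starting from $0$, and a direct check gives
\[\{\tx{-x}<\tau_x^-\}\;=\;\{\tx{-x}<\infty\}\cap B\cap\{\tilde\tau_0^->0\},\]
where $\tilde\tau_0^-$ is $\tau_0^-$ applied to $\tilde X$. Independence then yields
\[\p(\tx{-x}<\tau_x^-)\;=\;\p(\tx{-x}<\infty,\,B)\cdot\p(\tau_0^->0).\]

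By Blumenthal's $0$–$1$ law the second factor lies in $\{0,1\}$. If it equals $0$, that is, $0$ is regular for $(-\infty,0)$, the proposition follows at once. Otherwise $\p(\tau_0^->0)=1$, i.e.\ $0$ is irregular for $(-\infty,0)$; by a classical criterion (Rogozin; see \cite[Ch.~VI]{bertoin}) this forces $X$ to be of bounded variation, and since $X$ is not a CPP one can further pin down the drift coefficient in $X(t)=dt+\sum_{s\leq t}\Delta X_s$ to satisfy $d>0$.

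It then remains to prove $\p(\tx{-x}<\infty,\,B)=0$ in this bounded-variation, $d>0$ regime. On $B\cap\{\tx{-x}<\infty\}$ the level $-x$ is reached from above, either continuously (with $X(\tx{-x}^-)=-x$ and no jump at $\tx{-x}$) or via a downward jump landing exactly on $-x$. The continuous arrival is ruled out because between jumps the path is linear with strictly positive slope $d$, so $X$ cannot descend continuously. For the jump case, one conditions on the jump time $s=\tx{-x}$: the pre-jump value $X(s^-)$ inherits an absolutely continuous component from the independent drift part $ds$, while $\Delta X_s$ is independent of $X(s^-)$, so $\p(\Delta X_s=-x-X(s^-))=0$; a Fubini argument over the countable family of jump times then finishes the step.

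The delicate part is the very last one: one must justify absolute continuity of $X(s^-)$ when $\nu$ is permitted to have atoms or even infinite mass, and the key point is that the continuous drift contribution $ds$ is independent of the jump mechanism and smears out any atomic structure. By contrast, the strong Markov factorisation in the first paragraph is routine, and the regularity dichotomy is a direct invocation of Blumenthal's $0$–$1$ law together with Rogozin's characterisation.
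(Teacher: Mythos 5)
Your skeleton — strong Markov factorization at $\tx{-x}$, then the Blumenthal $0$--$1$ dichotomy for $\p(\tau_0^->0)$ — is a genuinely different route from the paper, which instead deduces everything from the a.s.\ uniqueness of local extrema of a non-CPP \levy process (\cite[Prop.~VI.4]{bertoin}), handling $x>0$ by superposing an independent CPP with deterministic jumps of size $x$ and noting that otherwise $X+Y$ would attain an infimum without immediately passing below it. The factorization itself and the regular case $\p(\tau_0^->0)=0$ are correct.

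The gap is in the irregular case. From irregularity of $0$ for $(-\infty,0)$ you correctly conclude bounded variation, but the next step, ``since $X$ is not a CPP the drift satisfies $d>0$'', is false as stated: take $d=0$, infinite-activity positive jumps and negative jumps consisting of a single atom at $-2$ arriving at finite rate. This process is not a CPP (the \levy measure is infinite), yet by Bertoin's integral criterion it is irregular for $(-\infty,0)$, and it has no drift — so your entire endgame (no continuous descent ``between jumps'', the drift smearing out atoms of the pre-jump law) is vacuous for it. To close this case you would need either the standing assumption~(\ref{eq:assumption}) of this subsection combined with the Kesten--Bretagnolle theorem (a bounded variation process with $d=0$ cannot hit points, so the case is empty for $x>0$, with $x=0$ still requiring a word, e.g.\ atomlessness of the law of $X(t)$ under infinite activity), or a separate argument; note the paper's proof uses only ``not CPP''. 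Two further points even when $d>0$: the phrase ``between jumps the path is linear'' only makes sense for finite activity — with infinite \levy mass the jump times are dense, and ruling out a continuous arrival at $-x$ really needs the fact that a bounded variation process creeps downward only if $d<0$; and the ``Fubini over jump times'' step should be run through the compensation formula, using continuity of the law of $X(s)$ in the infinite-activity case and the moving-atom argument via the drift in the finite-activity case. You flag the latter as delicate, but it is precisely where the work sits, whereas the paper's argument bypasses all of these path-regularity distinctions.
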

\begin{proof}
It is known that the local extrema of $X$ are all distinct,
except in the CPP case, see~\cite[Prop.~VI.4]{bertoin}. So
$\tx{0}<\tau_0^-$ has zero probability. Suppose next that for some $x>0$ the event $\tx{-x}<\tau_x^-$ has positive probability. Consider an independent CPP $Y$,
which has deterministic jumps of size $x$. Note that the \levy process $X+Y$ can achieve
its infimum without immediately passing it with positive
probability (there is exactly one jump of $Y$ in the interval $[0,\tx{-x}]$; note that $\tx{-x}>0$ a.s., because the paths of $X$ are \cadlag). Hence it is a CPP, which further implies that $X$ is a
CPP too, which is not the case.
\end{proof}

\subsection*{MAPs}
Let $(X,J)$ be a MAP satisfying the assumptions of Section~\ref{sec:prelim}.
The structure of MAP allows for immediate generalization of
the concept of occupation densities.
It is not difficult to show that the process $X$ observed at the times when $J=i$ is a \levy process. It can be seen as an independent sum of $X_i$ and a CPP. The latter process incorporates the increments of $X$ corresponding to the time intervals when $J\neq i$. These increments need not to be negative.
Importantly, this \levy process satisfies the assumption~(\ref{eq:assumption}); one can check the conditions following~(\ref{eq:assumption}).

Let us define $L(x,j,t)$, the
occupation density of $(X,J)$ at $(x,j)$ up to time $t$. If $0$ is irregular for itself we put
\[L(x,j,t)=\frac{1}{d_j}\#\{s\in[0,t):X(s)=x,J(s)=j\},\]
where $d_j>0$ is the drift coefficient of $X_j$;
otherwise $L(x,j,t)$ is the limit in $L^2(\p)$ of
\[L^\epsilon(x,j,t)=\frac{1}{2\epsilon}\int_0^t\ind{|X(s)-x|<\epsilon,J(s)=j}\D s\]
as $\epsilon\downarrow 0$.
Moreover, the occupation density formula becomes
\[\int_0^tf(X(s))\ind{J(s)=j}\D s=\int_\R f(x)L(x,j,t)\D x.\]

Observe that $L(x,j,t)$ is an increasing $\mathcal F_t$-adapted
process, which increases only if $X=x,J=j$. The strong Markov property of $X$ implies the following result.
\begin{proposition}\label{prop:occ_density_MAP}
For every $y,i$ and every stopping time $T$ with
$X(T)=y$ on $\{J(T)=i\}$, the shifted process
$L(x,j,T+t)-L(x,j,T),t\geq 0$ is independent of $\mathcal F_T$
under $\p(\cdot|J(T)=i)$, and has the same law as
$L(x-y,j,t),t\geq 0$ under $\p_i$.
\end{proposition}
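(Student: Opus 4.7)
My plan is to adapt the L\'evy-process argument of \cite[Prop.~V.4]{bertoin} to MAPs, combining the strong Markov property of $(X,J)$ at $T$ with the spatial shift invariance of the additive component. The proof splits along the dichotomy used to define $L(x,j,t)$: the irregular case is essentially tautological, while the regular case needs a short $L^2$-limit argument.

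First I would treat the irregular case, where $L(x,j,t)=d_j^{-1}\#\{s\in[0,t):X(s)=x,J(s)=j\}$. The strong Markov property says that under $\p(\cdot\mid J(T)=i)$ the shifted pair
\[(X'(s),J'(s)):=(X(T+s)-y,\,J(T+s)),\qquad s\geq 0,\]
is independent of $\mathcal F_T$ and has the same law as $(X,J)$ under $\p_i$. Since $\{X(T+s)=x,\,J(T+s)=j\}=\{X'(s)=x-y,\,J'(s)=j\}$,
\[L(x,j,T+t)-L(x,j,T)=\frac{1}{d_j}\#\{s\in[0,t):X'(s)=x-y,\,J'(s)=j\}\]
is a measurable functional of $(X',J')$ alone, and so inherits $\mathcal F_T$-independence and the law of $L(x-y,j,t)$ under $\p_i$.

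In the regular case I would run the same argument at the level of the $\epsilon$-approximations. The substitution $s\mapsto T+s$, together with the strong Markov identification above, yields
\[L^\epsilon(x,j,T+t)-L^\epsilon(x,j,T)=\frac{1}{2\epsilon}\int_0^t\ind{|X'(s)-(x-y)|<\epsilon,\,J'(s)=j}\,\D s,\]
which under $\p(\cdot\mid J(T)=i)$ is $\mathcal F_T$-independent and distributed as $L^\epsilon(x-y,j,t)$ under $\p_i$. Passing to the $L^2(\p)$-limit as $\epsilon\darr 0$ concludes, since $L^2$-limits preserve both independence of a fixed $\sigma$-algebra and equality in law.

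The hard part is not the algebra but the limit. To identify the $L^2$-limit of $L^\epsilon(x,j,T+t)-L^\epsilon(x,j,T)$ with $L(x,j,T+t)-L(x,j,T)$ I would rely on $L^2$-convergence of $L^\epsilon(x,j,\cdot)$ at both times, and on the existence of the occupation density for the post-$T$ MAP; the latter is not an extra hypothesis because the post-$T$ process has the law of $(X,J)$ under $\p_i$, which falls under the construction sketched before the proposition (in particular, $X$ sampled on each $\{J=j\}$ is a L\'evy process satisfying~(\ref{eq:assumption})). To upgrade pointwise-in-$t$ convergence to the process-level statement one uses the fact, inherited from~\cite[Prop.~V.2]{bertoin} applied to the post-$T$ MAP, that the convergence is uniform on compact time intervals.
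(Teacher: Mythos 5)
Your proposal is correct and follows the same route the paper intends: the paper states the proposition as an immediate consequence of the strong Markov property of the MAP, transferring the corresponding \levy-process fact of \cite[Prop.~V.4]{bertoin} to the post-$T$ process, which is exactly what you implement. Your write-up merely fills in the details the paper leaves implicit (the counting argument in the irregular case, the $\epsilon$-approximation with the uniform-on-compacts $L^2$-limit in the regular case), and these details are handled correctly.
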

This proposition immediately results in the following identity for $x\neq 0$:
\begin{equation}\label{eq:Lx_L}\e L(x,j,\infty)=\p[J(\tx{x})]\e
L(0,j,\infty).
\end{equation}
This identity would be true for all $x$ if we replaced the first hitting time with the first entrance time.
Let $\eL$ be a
$N\times N$ matrix with $(i,j)$-th element equal to $\e_i
L(0,j,\infty)$. Observe that $\eL$ has
strictly positive entries.

\begin{theorem}\label{thm:transform}
For all $\a\geq 0$, such that $k(\a)<0$, it holds that
\[\int_\R e^{\a x}\p[J(\tx{x})]\D x\eL=-F(\a)^{-1}.\]
\end{theorem}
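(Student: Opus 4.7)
The plan is to read the identity off the occupation density formula by integrating a single exponential test function against it, then evaluate both sides of the resulting equality in two different ways.

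First I would apply~(\ref{eq:occ_density}) (in its MAP form) with $f(x)=e^{\a x}$ and $t=\infty$, fix $i\in E$, take $\p_i$-expectations, and use Tonelli's theorem (everything is non-negative). This turns the left-hand side into
\[\e_i\int_0^\infty e^{\a X(s)}\ind{J(s)=j}\D s=\int_0^\infty \left(e^{F(\a)s}\right)_{ij}\D s,\]
by the defining matrix-exponent identity $\e[e^{\a X(t)};J(t)]=e^{F(\a)t}$, while the right-hand side becomes $\int_\R e^{\a x}\e_i L(x,j,\infty)\D x$. Invoking~(\ref{eq:Lx_L}) (the single point $x=0$ is Lebesgue-null, so can be ignored), the latter equals the $(i,j)$ entry of $\int_\R e^{\a x}\p[J(\tx{x})]\D x\,\eL$.

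Next I would evaluate the time integral on the left. The assumption $k(\a)<0$ combined with Perron--Frobenius (as recalled in Section~\ref{sec:prelim}) gives that every eigenvalue of $F(\a)$ has strictly negative real part, so $F(\a)$ is invertible and the matrix exponential is integrable with
\[\int_0^\infty e^{F(\a)s}\D s=-F(\a)^{-1}.\]
Equating the two expressions entry-by-entry and collecting into matrix form yields the claim.

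The main obstacle is simply to make sure that all the manipulations above are legitimate; in particular that each entry of $\e_i L(x,j,\infty)$ is finite (so that the Fubini/Tonelli step gives something meaningful on both sides). Under $k(\a)<0$ the left-hand matrix $-F(\a)^{-1}$ has finite entries, so the non-negative integrand $e^{\a x}\e_i L(x,j,\infty)$ is integrable in $x$, forcing $\e_i L(x,j,\infty)<\infty$ for a.e.\ $x$ and hence, via~(\ref{eq:Lx_L}) and the strict positivity of the entries of $\p[J(\tx{x})]$ for $x$ in a set of positive measure, finiteness of $\eL$ as well. Apart from this bookkeeping, the argument is essentially a one-line application of the occupation density formula.
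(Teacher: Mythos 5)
Your argument is correct and essentially coincides with the paper's own proof: both apply the occupation density formula to $f(x)=e^{\a x}$ over $[0,\infty)$, evaluate the time integral as $\int_0^\infty e^{F(\a)s}\D s=-F(\a)^{-1}$ using that $k(\a)<0$ forces all eigenvalues of $F(\a)$ to have negative real part, and convert the spatial integral through identity~(\ref{eq:Lx_L}) (the paper exchanges limit and expectation by monotone convergence, which is the same Tonelli step you invoke). Your closing remarks on finiteness of $\eL$ and of $\int_\R e^{\a x}\p[J(\tx{x})]\D x$ are exactly the content of Remark~\ref{po} in the paper, so they are consistent with, rather than additional to, the intended argument.
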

\begin{proof}
Firstly, by the monotone convergence theorem we get
\[\e\lim_{t\rightarrow\infty}\int_0^t e^{\a X(s)}e_{J(s)}\D s=\int_0^\infty e^{F(\a)s}\D s=-F(\a)^{-1},\]
where we used the fact that the real parts of the eigenvalues of
$F(\a)$ are all negative.
Using the occupation density formula we obtain
\[\lim_{t\rightarrow\infty}\int_0^t e^{\a X(s)}\ind{J(s)=j}\D s=\lim_{t\rightarrow\infty}\int_\R e^{\a x}L(x,j,t)\D x=\int_\R e^{\a x}L(x,j,\infty)\D x.\]
Identity~(\ref{eq:Lx_L}) completes the proof.
\end{proof}

\begin{remark}\label{po}
If there is an $\a$ satisfying the conditions of
Theorem~\ref{thm:transform} then both $\eL$ and $\int_\R e^{\a
x}\p[J(\tx{x})]\D x$ must have finite entries. Furthermore, $\eL$ is
then invertible. This is the case if $Q$ is transient: $Q\1\neq \0$, because then $k(0)<0$.
\end{remark}

Let us introduce
another matrix $\eL(x),x\geq 0$, which will play an important role
in the study of the scale matrix. Let
\[\eL_{ij}(x)=\e_i L(0,j,\tau^+_x)\] for $x>0$.
That is, $\eL(x)$ is the expected occupation time at 0
up to the first passage time over $x$.
Clearly, the elements of $\eL(x)$ are non-negative, non-decreasing functions with $\lim_{x\rightarrow\infty}\eL(x)=\eL$. In addition, we define $\eL(0)=\lim_{x\downarrow 0}\eL(x)$.
But $\tau_x^+\downarrow 0$ a.s. as $x\downarrow 0$. Hence
\begin{equation}\label{eq:L0}\eL_{ij}(0)=\e_i L(0,j,0+)=\begin{cases}1/d_i&\text{if }i=j\text{ and }X_i\text{ is of bounded variation,}\\
0&\text{otherwise.}\end{cases}\end{equation}
Hence $\eL(x)$ is the expected occupation time at 0 up to \emph{and including} first passage over $x\geq 0$.
In view of
Proposition~\ref{prop:occ_density_MAP} the matrix $\eL(x),x\geq 0$ can
be expressed as
\begin{equation}\label{eq:H}\eL(x)=\eL-\p[J(\tau_x^+)]\p[J(\tx{-x})]\eL,\end{equation}
given that $\eL$ has finite entries.

Let us next present a technical lemma.
\begin{lemma}\label{lem:L_transform}
For any $u>0$ it holds that $\int_0^\infty e^{-ux}\eL(x)\D
x$ has finite entries. Moreover, matrix $\eL$ has finite entries unless $Q\1=\0$ and $k'(0)=0$.
\end{lemma}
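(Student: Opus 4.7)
The second claim naturally splits into three sub-cases. If $Q\1\neq\0$, Perron-Frobenius applied to $F(0)=Q$ gives $k(0)<0$, whence Remark~\ref{po} with $\alpha=0$ yields that $\eL$ has finite entries. If $Q\1=\0$ but $k'(0)<0$, convexity of $\alpha\mapsto k(\alpha)$ together with $k(0)=0$ provides some small $\alpha>0$ with $k(\alpha)<0$, and Remark~\ref{po} applies again. If $Q\1=\0$ and $k'(0)>0$, then $X\to+\infty$ a.s., and I would decompose via strong Markov at $\tau_1^+$ followed by strong Markov at $\tx{-1}$ of the shifted process to obtain
\[
\eL\;=\;\eL(1)+e^{\Lambda}\p[\tx{-1}<\infty,J(\tx{-1})]\,\eL.
\]
The matrix $M:=e^{\Lambda}\p[\tx{-1}<\infty,J(\tx{-1})]$ is strictly sub-stochastic and irreducible (some $\p_k(\tx{-1}=\infty)>0$ because $X$ escapes to $+\infty$ without ever revisiting $-1$ with positive probability), hence has spectral radius $<1$; therefore $\eL=(\matI-M)^{-1}\eL(1)$, and $\eL(1)<\infty$ follows from $\e_i\tau_1^+<\infty$ together with the occupation density formula.

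For the first claim I start from the Fubini identity
\[
u\int_0^\infty e^{-ux}\eL_{ij}(x)\,\D x\;=\;\e_i\int_0^\infty e^{-u\oX(s)}\,\D L(0,j,s),
\]
obtained by writing $L(0,j,\tau_x^+)=\int_0^\infty\1_{s\le\tau_x^+}\D L(0,j,s)$, interchanging integrals, and using $\{s\le\tau_x^+\}=\{\oX(s)\le x\}$ (with $\oX$ continuous by spectral negativity). Whenever $\eL<\infty$, the right-hand side is dominated by $\eL_{ij}$ and the conclusion is immediate. The outstanding recurrent case $Q\1=\0$, $k'(0)=0$ requires a growth estimate on $\eL(x)$: strong Markov at $\tau_x^+$ and then at $\tx{-x}$ of the shifted process yields the recursion
\[
(\matI-e^{\Lambda x}R(x))\,\eL(x+1)=\eL(x),\qquad R(x)_{kl}:=\p_k(\tx{-x}<\tau_1^+,J(\tx{-x})=l).
\]
In the recurrent regime $\Lambda$ has a zero eigenvalue so $e^{\Lambda x}$ stays bounded, while two-sided exit asymptotics give $R(x)\1=O(1/x)$; consequently $\|(\matI-e^{\Lambda x}R(x))^{-1}\|\le 1+O(1/x)$, and iterating produces $\eL(x)=O(x^C)$ for some $C>0$, making $\int_0^\infty e^{-ux}\eL(x)\,\D x$ finite for every $u>0$.

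The main obstacle is the quantitative two-sided exit estimate $\p_k(\tx{-x}<\tau_1^+)=O(1/x)$ in the recurrent MAP setting. For Brownian motion this is immediate from the linear harmonic function $y\mapsto(y+x)/(1+x)$ on $(-x,1)$; the MAP analogue requires constructing a suitable bounded matrix-valued harmonic function on the strip via the ascending/descending ladder decomposition together with the matrix exponent $F$, and is the most delicate ingredient of the argument.
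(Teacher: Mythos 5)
Your treatment of the second claim is broadly sound and close in spirit to the paper (the paper uses Remark~\ref{po} for $Q\1\neq\0$ and a regenerative argument covering both signs of $k'(0)$; your convexity trick for $k'(0)<0$ and the renewal decomposition $\eL=(\matI-M)^{-1}\eL(1)$ for $k'(0)>0$ are acceptable variants). But the first claim is where the lemma has real content, namely the recurrent case $Q\1=\0$, $k'(0)=0$, and there your argument has a genuine gap. The recursion $(\matI-e^{\Lambda x}R(x))\eL(x+1)=\eL(x)$ is legitimate (the paper uses the same identity in the proof of Theorem~\ref{thm:main_scale}), but everything then hinges on the quantitative estimate $\p_k(\tx{-x}<\tau_1^+)=O(1/x)$, which you do not prove and yourself flag as ``the most delicate ingredient''. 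In the \levy case that rate is read off from $W(x)/W(x+1)$, i.e.\ from the scale function; in this paper the scale matrix in the recurrent case is constructed \emph{using} the present lemma (it is exactly what justifies the dominated-convergence step for $q\downarrow 0$), so importing two-sided exit asymptotics here is circular unless you construct the bounded harmonic object independently — which is only gestured at. The paper avoids any quantitative estimate: writing $u\int_0^\infty e^{-ux}\eL(x)\D x=\e\,\eL(e_u)$ for an independent exponential level $e_u$ (your Fubini identity is the same observation), it notes this is dominated by $\eL$ outside the recurrent case, and in the recurrent case bounds $\e_jL(0,j,\tau^+_{e_u})$ by a geometric series over regeneration cycles at $\tau(j,t)=\inf\{s\geq t: X(s)=0,\,J(s)=j\}$, using only the qualitative facts that $\p_j(\tau(j,t)<\tau^+_{e_u})<1$ (since $X$ oscillates) and $\e_jL(0,j,t)<\infty$ for fixed $t$ (from the killed process). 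Without either that soft argument or a proof of your $O(1/x)$ bound, your proof of the first claim is incomplete precisely in the only case that matters.

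Two smaller points in your $k'(0)>0$ case. First, ``$\eL(1)<\infty$ follows from $\e_i\tau_1^+<\infty$ together with the occupation density formula'' is not valid as stated: the occupation density formula controls $\int_\R \e_i L(y,j,\tau_1^+)\D y\leq \e_i\tau_1^+$ and hence gives finiteness only for Lebesgue-a.e.\ level $y$, not at the specific level $0$; you need a Markov/regenerative argument (or the killed-process bound $\e_iL(0,j,e_q)<\infty$ together with a return probability strictly less than one, which is how the paper proceeds). Second, ``spectral radius $<1$'' for $M=e^{\Lambda}\p[\tx{-1}<\infty,J(\tx{-1})]$ needs either irreducibility of $M$ (asserted, not checked) or the stronger statement that every row sum is strictly less than one, i.e.\ $\p_k(\uX>-1)>0$ for every $k$, which deserves a short argument. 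Both are fixable, unlike the recurrent-case gap above.
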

\begin{proof}
See Appendix.
\end{proof}
The condition $Q\1=\0$ and $k'(0)=0$ excludes recurrent processes.
The above lemma implies that the entries of $\eL(x)$ are finite. Furthermore, they are continuous functions of $x\geq 0$. To see this use the definition of $\eL(x)$ and a.s.~continuity of $\tau_x^+$ at a fixed $x> 0$.

\section{Two-sided exit and the scale matrix}\label{sec:scale}
One of the key ingredients in the construction of a scale matrix is given by the following lemma.
\begin{lemma}For any $a,b\geq 0$ with $a+b>0$ the events $\{\tau_a^+<\tau_b^-\}$ and $\{\tau_a^+<\tx{-b}\}$ coincide a.s.
\end{lemma}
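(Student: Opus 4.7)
The plan is to establish the two inclusions separately. For $\{\tau_a^+<\tx{-b}\}\subseteq\{\tau_a^+<\tau_b^-\}$ the statement is pathwise: suppose on some sample point that $\tau_b^-\leq\tau_a^+<\tx{-b}$. Then $X$ has strictly crossed below $-b$ by time $\tau_a^+$, yet $X(\tau_a^+)=a$ (the first passage over $a$ is attained exactly because $X$ has no positive jumps). The absence of positive jumps then forces the climb from $X(\tau_b^-)<-b$ up to $a$ to pass continuously through the level $-b$ at some time in $(\tau_b^-,\tau_a^+]$, giving $\tx{-b}\leq\tau_a^+$, a contradiction.

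For the converse inclusion holding almost surely, it suffices to prove $\p(\tx{-b}\leq\tau_a^+<\tau_b^-)=0$. On this event $X(\tx{-b})=-b$ and the path stays at or above $-b$ on $[\tx{-b},\tau_a^+]$, so in particular $\tx{-b}<\tau_b^-$. I would apply the strong Markov property at the stopping time $\tx{-b}$: conditionally on $\{\tx{-b}<\infty,\,J(\tx{-b})=j\}$, the shifted pair $(X(\tx{-b}+\cdot)+b,\,J(\tx{-b}+\cdot))$ is a MAP started at $(0,j)$, and the event of interest becomes $\{\tilde\tau_{a+b}^+<\tilde\tau_0^-\}$ for this shifted MAP. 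For this to have positive probability one needs $\tilde\tau_0^->0$ to be possible from state $j$, i.e., $0$ to be irregular for $(-\infty,0)$ starting in $j$; in the spectrally negative setting this forces $X_j$ to be of bounded variation with strictly positive drift coefficient $d_j$.

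In any such state $j$ the continuous part of $X$ has slope $d_j>0$, so $X$ cannot reach $-b$ continuously from above while in state $j$; the hit at $\tx{-b}$ must therefore come from a jump landing exactly on $-b$, either an internal jump of $X_j$ or a state-switching jump $U_{ij}$. To conclude $\p(\tx{-b}<\infty,\,J(\tx{-b})=j)=0$ for every such $j$, I would adapt the compound Poisson perturbation used in the proof of Proposition~\ref{prop:hit_pass}: if this probability were positive for some $b>0$, superimposing an independent CPP of deterministic $+b$-jumps on $X$ would produce a spectrally negative MAP whose infimum is attained without the process immediately being strictly below it, on an event of positive probability, contradicting the MAP analogue of the distinctness of local extrema of a non-CPP spectrally negative process.

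The step I expect to be the main obstacle is precisely this last reduction: transferring the ``local extrema are distinct'' property of non-CPP spectrally negative \levy processes to the MAP framework. The natural route is to condition on the trajectory of $J$, under which $X$ becomes a concatenation of spectrally negative \levy pieces glued by the independent switch jumps $U_{ij}$; the \levy result then applies on each piece, and one must argue separately that the finitely many switch jumps almost surely do not hit any prescribed level $-b$. Handling the case of atomic $U_{ij}$ distributions is where the technical work concentrates.
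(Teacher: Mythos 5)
Your first inclusion, $\{\tau_a^+<\tx{-b}\}\subseteq\{\tau_a^+<\tau_b^-\}$, argued pathwise from the absence of positive jumps, is correct and is the same argument the paper uses. The genuine gap is in the converse, almost sure, inclusion. First, the statement you set out to establish there, $\p(\tx{-b}<\infty,\,J(\tx{-b})=j)=0$ for the bounded-variation states $j$, is false as written: a spectrally negative MAP hits $-b$ with positive probability by first passing below $-b$ and then creeping back up onto it; indeed $\p[J(\tx{-x})]$ is a nondegenerate matrix out of which the paper builds the scale matrix in~(\ref{eq:W}). What you actually need is $\p(\tx{-b}<\tau_b^-)=0$, that is, the level cannot be hit strictly before it is passed. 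Second, your route to that fact --- strong Markov at $\tx{-b}$, the regularity dichotomy, and a compound Poisson perturbation adapted to MAPs --- is precisely the step you acknowledge you cannot complete, and the sketch is not sound as given: superimposing deterministic $+b$ jumps destroys spectral negativity, the contradiction in the proof of Proposition~\ref{prop:hit_pass} is that the perturbed process would have to be a compound Poisson process (not a statement about extrema of spectrally negative processes), and conditioning on the whole trajectory of $J$ runs into exactly the atom problem you flag, which your outline does not resolve.

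The missing idea is that no new MAP-level result is needed. The offending event $\{\tx{-b}\leq\tau_a^+<\tau_b^-\}$ is contained in $\{\tx{-b}<\tau_b^-\}$, and on $\{J(\tx{-b})=j\}$ the latter forces the \levy process obtained by observing $X$ at the times when $J=j$ (introduced in Section~\ref{sec:occ_density}: it is $X_j$ plus an independent compound Poisson process, hence never itself a CPP and satisfying~(\ref{eq:assumption})) to hit $-b$ strictly before passing below $-b$: at the observed hitting instant its value is $-b$, while all of its earlier values are values of $X$ prior to $\tau_b^-$ and hence are $\geq -b$. Proposition~\ref{prop:hit_pass} applied to this \levy process gives probability zero, and summing over the finitely many states $j$ yields $\tau_b^-\leq\tx{-b}$ a.s. This one-line reduction is what makes the paper's proof short; your regularity case analysis and the extension of the distinct-local-extrema property to MAPs are unnecessary once the problem is pushed onto the state-observed \levy processes.
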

\begin{proof}
If $\tau_a^+<\tau_b^-$ then $\tau_a^+<\tx{b-}$, because $\tau_b^-\leq\tx{-b}$ a.s., which follows immediately from Proposition~\ref{prop:hit_pass}. It is left to show that $\tau_b^-\leq\tau_a^+$ implies $\tx{-b}\leq\tau_a^+$.
If $\tau_b^-<\tau_a^+$ then $\tx{-b}\leq \tau_a^+$, because of the absence of positive jumps (the level $-b$ should be hit on the way to the level $a$); the equality arises if $X$ is killed before reaching $\tx{-b}$. If $\tau_b^-=\tau_a^+$ then $\tau_a^+=\infty$ and hence $\tx{-b}\leq \tau_a^+$.
\end{proof}

The above lemma allows us to study $\p[\tau_a^+<\tx{-b};J(\tau^+_a)]$ instead of $\p[\tau_a^+<\tau_b^-;J(\tau^+_a)]$. The advantage of using the stopping time $\tx{-b}$ instead of $\tau^-_b$ is that the value of $X$ is known at the first time, but not at the second. Pick arbitrary $a,b\geq 0$ such
that $a+b>0$ and consider the following equations, which are an
immediate consequence of the strong Markov property:
\begin{align*}
\p[J(\tau^+_a)]&=\p[\tau_a^+<\tx{-b};J(\tau^+_a)]+\p[\tx{-b}<\tau_a^+;J(\tx{-b})]\p[J(\tau^+_{a+b})],\\
\p[J(\tx{-b})]&=\p[\tx{-b}<\tau_a^+;J(\tx{-b})]+\p[\tau_a^+<\tx{-b};J(\tau_a^+)]\p[J(\tx{-a-b})].
\end{align*}
Right-multiplying the second equation by $\p[J(\tau^+_{a+b})]=e^{\Lambda(a+b)}$ and subtracting the first one gives:
\begin{align*}&\p[\tau_a^+<\tx{-b};J(\tau^+_a)](\p[J(\tx{-a-b})]e^{\Lambda(a+b)}-\matI)\\&=\p[J(\tx{-b})]e^{\Lambda(a+b)}-e^{\Lambda a}.\end{align*}
Hence
\begin{equation}\label{eq:exit}\p[\tau_a^+<\tx{-b};J(\tau^+_a)]W(a+b)=W(b),\end{equation} where we define $W(x)$ for all $x\geq
0$ through
\begin{equation}\label{eq:W}W(x)=\left(e^{-\Lambda
x}-\p[J(\tx{-x})]\right)\eL,\end{equation}
and $\eL$ is the matrix of expected occupation times at 0 defined in Section~\ref{sec:occ_density}. Note that~(\ref{eq:exit}) holds true if $\eL$ is replaced by an arbitrary constant matrix. The importance of our particular choice will be made clear in a moment. We note that (\ref{eq:W}) is well defined whenever $\eL$ has finite entries, which is the case unless there is no killing $Q\1=\0$ and the process $X$ oscillates: $k'(0)=0$, see Lemma~\ref{lem:L_transform}. The later delicate case will be treated using a limiting argument. It can be shown that in this case the matrix in the brackets in~(\ref{eq:W}) is singular and the entries of $\eL$ are infinite.

The following lemma identifies the transform of $W(x)$ in the case when killing is present. Recall also the definition of $\eta$ as given in Theorem~\ref{thm:main_scale}.
\begin{lemma}\label{lem:transform}
If $Q\1\neq \0$ then for all $\a>\eta$ it holds that:
\[\int_0^\infty e^{-\a x}W(x)\D x=F(\a)^{-1}.\]
\end{lemma}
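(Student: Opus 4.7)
The plan is to compute the Laplace transform of $W$ directly from its explicit form $W(x)=(e^{-\Lmb x}-\p[J(\tx{-x})])\eL$ in \eqref{eq:W}, reducing the problem to identifying the transform of the second summand and then extracting this from Theorem~\ref{thm:transform} by analytic continuation. Throughout I would use that $Q\1\ne\0$ forces $k(0)<0$, which both supplies the hypothesis of Theorem~\ref{thm:transform} near $\a=0$ and makes all eigenvalues of $\Lmb$ have strictly negative real part.

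For $\a>\eta$ the matrix $e^{-\Lmb x}$ grows at rate at most $\eta$ while $\p[J(\tx{-x})]$ is bounded, so $\hat W(\a):=\int_0^\infty e^{-\a x}W(x)\D x$ converges absolutely; because the eigenvalues of $\a\matI+\Lmb$ have positive real parts, splitting gives
\[\hat W(\a)=(\a\matI+\Lmb)^{-1}\eL-C(\a),\qquad C(\a):=\int_0^\infty e^{-\a x}\p[J(\tx{-x})]\D x\cdot\eL.\]
The function $C(\a)$ is the Laplace transform of a bounded matrix-valued function, hence each entry is analytic on $\Cpos$, and the proof reduces to showing that $C(\a)=(\a\matI+\Lmb)^{-1}\eL-F(\a)^{-1}$ on $\Cpos$.

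To produce this identity I would apply Theorem~\ref{thm:transform} on an interval $[0,\a^*)$ on which $k(\a)<0$, supplied by continuity of $k$. There
\[\int_\R e^{\a x}\p[J(\tx{x})]\D x\cdot\eL=-F(\a)^{-1},\]
and since $\tx{x}=\tau^+_x$ for $x>0$ by spectral negativity, the integrand on $(0,\infty)$ equals $e^{\Lmb x}\eL$. Computing that positive part as $-(\a\matI+\Lmb)^{-1}\eL$ on the sub-interval $\a<\eta_{\min}$, where $\eta_{\min}>0$ is the smallest real part of an eigenvalue of $-\Lmb$, and substituting $y=-x$ in the negative part to produce $C(\a)$, after rearrangement I obtain the desired identity for $C(\a)$ on the non-trivial interval $(0,\min(\a^*,\eta_{\min}))$.

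The last step is analytic continuation to all of $\Cpos$. The right-hand side $(\a\matI+\Lmb)^{-1}\eL-F(\a)^{-1}$ is meromorphic on $\Cpos$ with potential poles only at eigenvalues of $-\Lmb$ and at zeros of $\det F$, which coincide in $\Cpos$ by the identification recalled in Section~\ref{sec:prelim}; the analyticity of $C(\a)$ on $\Cpos$ then forces these apparent poles to cancel and extends the identity to all of $\Cpos$. Substituting back into the first display gives $\hat W(\a)=F(\a)^{-1}$ for $\a>\eta$, as required. I expect the pole-cancellation and continuation to be the delicate step, because the interval where Theorem~\ref{thm:transform} applies and the half-line $\{\a>\eta\}$ on which $\hat W(\a)$ is defined are generally disjoint, and only the analyticity of $C$ through the intermediate strip bridges them.
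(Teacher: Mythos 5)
Your proposal is correct and takes essentially the same route as the paper: both start from the representation (\ref{eq:W}), split $\int_0^\infty e^{-\a x}W(x)\D x$ into the resolvent term $(\a\matI+\Lambda)^{-1}\eL$ minus the transform of $\p[J(\tx{-x})]\eL$, identify the latter from Theorem~\ref{thm:transform} on a small interval near $0$ where $k(\a)<0$ (using $\tx{x}=\tau_x^+$ for $x>0$), and then continue analytically to $\a>\eta$. The only, inessential, difference is the continuation device: the paper first multiplies by $(\a\matI+\Lambda)$ on the left and $F(\a)$ on the right so that both sides are analytic on $\Cpos$ and then reverses the multiplication for $\a>\eta$, while you argue directly that the meromorphic right-hand side must coincide with the analytic transform $C(\a)$, so its apparent poles are removable.
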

\begin{proof}
Using~(\ref{eq:W}) we write
\[\int_0^\infty e^{-\a x}W(x)\D x=\int_0^\infty e^{(-\Lambda-\a\matI)x}\D x\eL-\int_0^\infty e^{-\a x}\p[J(\tx{-x})]\D x\eL.\]
Recall that the set of eigenvalues of $-\Lambda$ coincides with
the set of zeros of $\det(F(\a))$ in $\Cpos$. Hence the first
integral on the right converges and is equal to
$(\a\matI+\Lambda)^{-1}$ for $\a>\eta$. In addition,
Theorem~\ref{thm:transform} gives
\begin{equation}\label{eq:proof_transform}-F(\a)^{-1}=\int_0^\infty e^{\a x}\p[J(\tx{x})]\D
x\eL+\int_0^\infty e^{-\a x}\p[J(\tx{-x})]\D x\eL\end{equation} for
$\a\geq 0$ with $k(\a)<0$. Recall that $k(0)<0$ when $Q\1\neq \0$. The
continuity of $k(\a)$ implies that there exists $\epsilon>0$ such
that $k(\a)<0$ for all $\a\in[0,\epsilon)$. Equation~(\ref{eq:proof_transform}) can be rewritten as
\begin{equation}\label{eq:transform_positive}-\int_0^\infty e^{-\a x}\p[J(\tx{-x})]\D x\eL=F(\a)^{-1}-(\a\matI+\Lambda)^{-1}\eL\end{equation}
for $\a\in[0,\epsilon)$, because the real parts of all the
eigenvalues of $(\a\matI+\Lambda)$ are negative. The proof is
complete as soon as we show that the latter identity can be
continued to $\a>\eta$. To see this, multiply both sides by $F(\a)$
from the right and by $(\a\matI+\Lambda)$ from the
left. Then both sides are analytic for
$\a\in\Cpos$, hence
the equality holds for these $\a$. Finally, the matrices $F(\a)$ and $(\a\matI+\Lambda)$ are invertible for $\a>\eta$, so the above multiplication can be reversed.
\end{proof}

Let us proceed to the general case. Observe that (\ref{eq:W}) can be rewritten using (\ref{lambda}) and (\ref{eq:H}) as
\begin{equation}\label{eq:W_final}
W(x)=e^{-\Lambda x}\eL(x).
\end{equation}
This equation can be used as the definition of the scale matrix $W(x)$ in all the cases including the delicate case when $X$ oscillates. Recall that $\eL(x)$ is continuous in $x\geq 0$ and hence so is $W(x)$. Furthermore, $W(0)=\eL(0)$ which is given in~(\ref{eq:L0}).
Let us finish the proof of our main result.
\begin{proof}[Proof of Theorem~\ref{thm:main_scale}]
It is left to show that the displays in the statement hold for the case of no killing, and that $W(x)$ is invertible for all $x>0$.

The first part is based on a continuity argument.
Superimpose independent exponential killing of rate $q>0$, and denote the corresponding quantities through $F^q(\a),\Lambda^q,\eL^q(x),W^q(x)$. The first three objects converge to $F(\a),\Lambda,\eL(x)$ respectively as $q\downarrow 0$ and hence also $W^q(x)\rightarrow W(x)$. Take the limits in~(\ref{eq:exit}) and use the dominated convergence theorem to show that~(\ref{eq:exit}) also holds in the case of no killing.
Taking limits to obtain the transform is not that straightforward, because the entries of $W(x)$ are not necessarily non-negative (except when $X$ is a \levy processes where the extended continuity theorem for Laplace transforms could be applied).

First observe that $\eL^q(x)\leq \eL(x)$, where the inequality is entry-wise. Moreover, according to Lemma~\ref{lem:app_bound}
there exists $\lambda>0$ and $q_0>0$ such that the entries of the matrix $e^{-\lambda x}e^{-\Lambda^q x}$ are bounded in absolute value for all $x\geq 0$ and $q\in[0,q_0)$. Lemma~\ref{lem:L_transform} allows to apply the dominated convergence theorem to establish the transform of the scale matrix for $\a>\lambda$ and $q=0$. It is only left to show that this identity can be continued to all $\a>\eta$. Note that for $\a>\eta$ the entries of $e^{-\a x}e^{-\Lambda x}$ are bounded in absolute value for all $x\geq 0$; one can use the Jordan decomposition of $\Lambda$. But then Lemma~\ref{lem:L_transform} implies that $\int_0^\infty |e^{-\a x}W_{ij}(x)|\D x<\infty$ for any $i,j$. So the entries of the transform $\int_0^\infty e^{-\a x}W(x)\D x$ are analytic functions in the domain $\Re(\a)>\eta$, see~\cite[Ch.~II]{widder}. Hence the transform can be continued as stated above.

Let us show that $W(x)$ is invertible for all $x>0$. It is enough to establish invertibility of $\eL(x)$. For this observe
that the following relation is true for $y>0$ according to Proposition~\ref{prop:occ_density_MAP}:
\[\eL(x+y)=\eL(x)+\p[J(\tau_x^+)]\p[\tx{-x}<\tau^+_y,J(\tx{-x})]\eL(x+y).\]
Notice that  $\p_i(\tx{-x}<\tau^+_y)<1$, hence the matrix
$\matI-\p[J(\tau_x^+)]\p[\tx{-x}<\tau^+_y,J(\tx{-x})]$ is invertible
for any $x,y>0$, because it is strictly diagonally dominant, see~\cite{matrix_analysis}. Thus if for any $x>0$ there is a vector $\bv$,
such that $\eL(x)\bv=\0$ then $\eL(y)\bv=\0$ for all $y>0$. But then
$F(\a)^{-1}\bv=\0$ for $\a>\eta$, which is a contradiction.
\end{proof}

\begin{remark}
In the standard construction for \levy processes described in e.g.~\cite{bertoin} and \cite{kyprianou} one starts from considering no killing and process $X$ drifting to~$+\infty$.
Then $W(x)$ is taken proportional to $\p(\uX\geq -x)$.
On the contrary our construction works in all scenarios except one delicate case, when the process is recurrent.
If in our set-up we assume additionally that $q=0$ and $X$ drifts to~$+\infty$
then the expression in the brackets in~(\ref{eq:W}) simplifies to $1-\p(\tx{-x}<\infty)=\p(\tx{-x}=\infty)=\p(\uX>-x)$, which is known to coincide with $\p(\uX\geq -x)$.
\end{remark}


\section{Two-sided reflection}\label{sec:2_sided_reflection}
In this section we consider two-sided reflection of $X$ with respect to an interval $[-a,0]$, where $a>0$. The main result of this section, Theorem~\ref{thm:2_sided}, is a cornerstone in solving exit problems for processes $Y$ and $\hat Y$.
Assume that $X(0)=x\in[-a,0]$ and define the solution of Skorohod problem  $H(t)\in [-a,0]$ by
\[H(t)=X(t)+\Rl(t)-\Ru(t),\] where $\Rl$ and $\Ru$ are the regulators at the lower barrier $-a$
and at the upper barrier $0$ respectively.
That is, $\Rl$ and $\Ru$ are non-decreasing processes with $\Rl(0)=\Ru(0)=0$, such that their points of increase are contained in $\{t\geq 0:H(t)=-a\}$ and $\{t\geq 0:H(t)=0\}$ respectively. It is known that such a triplet $(H,\Rl,\Ru)$ exists and is unique, see~\cite{kruk_ramanan}.

Observe that the process $\Ru$ is continuous, because $X$ has no positive jumps. It can be seen as the local time of $H$ at $0$.
Let $\rho_r$ be its inverse:
\[\rho_r=\inf\{t\geq 0:\Ru(t)>r\}\text{ for }r\geq 0.\] In other words, $\rho_r$ is the first passage time of $\Ru$ over $r$.
Note also that $\Ru(\rho_r)=r$ and $H(\rho_r)=0$. The strong Markov property of $X$ shows that $(\Rl(\rho_r),J(\rho_r)),r\geq 0$ is a MAP itself.
In fact,
it is a Markov-modulated
compound Poisson process, because it has
no jumps in a fixed interval with positive probability. We denote its
matrix exponent through $F^*(\a)$,
that is, for $\a\geq 0$ and $r\geq 0$ it holds that
\begin{align*}\e[e^{-\a \Rl(\rho_r)};J(\rho_r)]=e^{F^*(\a)r}.\end{align*}
The aim of this section is to identify the law of $(\Rl(\rho_r),J(\rho_r))$ for any $X(0)\in[-a,0]$. This law is characterized in the following theorem.
\begin{theorem}\label{thm:2_sided}
For all $\a\geq 0$ and $x\in[-a,0]$ it holds that $Z(\a,a)$ is
invertible and
\begin{align}
F^*(\a)&=W(a)F(\a)Z(\a,a)^{-1}-\a\matI,\label{eq:2_1}\\
\e_{x}[e^{-\a\Rl(\rho_0)};J(\rho_0)]&=Z(\a,a+x)Z(\a,a)^{-1}.\label{eq:2_2}
\end{align}
\end{theorem}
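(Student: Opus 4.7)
The approach combines a martingale identity for the free MAP~$X$ with the occupation density formula, in the spirit of the proof of Theorem~\ref{thm:main_scale}. The key initial observation is that at time~$\rho_r$,
\[X(\rho_r) = r - \Rl(\rho_r),\]
which follows from $H(\rho_r) = 0$, $\Ru(\rho_r) = r$, and the defining relation $X = H - \Rl + \Ru$. Combined with the strong Markov property at~$\rho_0$ and the defining transform of~$F^*$, this gives
\[\e_x[e^{\a X(\rho_r)}; J(\rho_r)] = e^{\a r}\, V(\a, x)\, e^{F^*(\a) r},\]
where $V(\a, x) := \e_x[e^{-\a \Rl(\rho_0)}; J(\rho_0)]$, with $V(\a, 0) = \matI$.

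Next, I would apply optional stopping at~$\rho_r$ to the standard MAP martingale $e^{\a X(t)} \bv_{J(t)} - \int_0^t e^{\a X(s)} (F(\a)\bv)_{J(s)}\, ds$, justified by a truncation step using the bound $X(s) \leq r$ on $[0, \rho_r]$ (which follows from $H \leq 0$ and $\Rl \geq 0$). In matrix form this yields
\[e^{\a r}\, V(\a, x)\, e^{F^*(\a) r} - e^{\a x}\matI = A_x(r)\, F(\a), \qquad A_x(r) := \e_x\Big[\int_0^{\rho_r} e^{\a X(s)}; J(s)\, ds\Big].\]
The occupation density formula for $(X, J)$ expresses $A_x(r) = \int_\R e^{\a y}\,\e_x L(y, j, \rho_r)\, dy$, and the expected densities are evaluated via the strong Markov property (Proposition~\ref{prop:occ_density_MAP}) together with the two-sided exit identity~(\ref{eq:exit}) from the proof of Theorem~\ref{thm:main_scale}. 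After collecting terms, the resulting matrix identity, specialized to $x = 0$ and varying~$r$, pins down $F^*(\a) + \a\matI = W(a) F(\a) Z(\a, a)^{-1}$, which is~(\ref{eq:2_1}); then, combined with~(\ref{eq:2_1}) for general $x \in [-a, 0]$, it solves for $V(\a, x) = Z(\a, a+x) Z(\a, a)^{-1}$, which is~(\ref{eq:2_2}).

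Finally, the invertibility of~$Z(\a, a)$ is argued in two regimes. For $\a > \eta$ it follows from $Z(\a, a) = \bigl[\int_0^\infty e^{-\a z} W(z+a)\, dz\bigr] F(\a)$ together with the transform identity of Theorem~\ref{thm:main_scale} and the invertibility of $F(\a)$; for general $\a \geq 0$ it then extends by analytic continuation in~$\a$ using the probabilistic interpretation of~(\ref{eq:2_2}). The principal obstacle is the evaluation of $A_x(r)$: since the free MAP~$X$ is displaced by the accumulated $\Ru - \Rl$ while $H$ alternates between interior excursions and visits to the boundaries $\{-a, 0\}$, expressing $\e_x L(y, j, \rho_r)$ in closed form in terms of~$W$ requires iterated applications of strong Markov at first hitting times inside $[-a, 0]$ together with~(\ref{eq:exit}); this is the most delicate step of the proof.
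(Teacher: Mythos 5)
Your opening moves coincide with the paper's: the relation $X(\rho_r)=r-\Rl(\rho_r)$ and the strong Markov property at $\rho_0$ give $\e_x[e^{\a X(\rho_r)};J(\rho_r)]=e^{\a r}V(\a,x)e^{F^*(\a)r}$ exactly as in the paper's proof. The genuine gap is the step you yourself flag as ``the most delicate'': the evaluation of $A_x(r)=\e_x[\int_0^{\rho_r}e^{\a X(s)};J(s)\D s]$, equivalently of $\e_x L(y,j,\rho_r)$, in terms of $W$ and $Z$. This is not a technical afterthought; it is the entire content of the theorem, and your sketch (``iterated applications of strong Markov together with (\ref{eq:exit})'') does not indicate how the dependence on $r$ is to be resolved: $\rho_r$ is defined through the two-sided reflection, not through level crossings of $X$, so the expected calendar-time occupation of the free process up to $\rho_r$ does not reduce to (\ref{eq:exit}) by a routine iteration. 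Moreover, you really do need the full finite-$r$ structure: with killing present and $\a$ small, $e^{\a r}V(\a,x)e^{F^*(\a)r}=V(\a,x)e^{(F^*(\a)+\a\matI)r}\rightarrow 0$, while $A_x(\infty)F(\a)=-e^{\a x}\matI$ by the total-occupation computation behind Theorem~\ref{thm:transform}; hence the $r\rightarrow\infty$ limit of your martingale identity degenerates to a triviality and yields neither (\ref{eq:2_1}) nor (\ref{eq:2_2}). Extracting the result from your identity would require, e.g., an additional transform in $r$ and the evaluation of $\e_x\int_0^\infty e^{\a X(s)-\theta\Ru(s)}\ind{J(s)=j}\D s$, i.e.\ a full Kella--Whitt-type computation that is absent from the proposal.

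The paper circumvents precisely this difficulty by applying the occupation density formula not in calendar time but along the local-time scale: $(X(\rho_r),J(\rho_r))$, $r\geq 0$, is a MAP whose underlying L\'evy processes have bounded variation and unit drift, so its occupation density $L^*(y,j,\infty)$ over the whole half-line $r\in[0,\infty)$ is simply the number of ladder epochs at $(y,j)$, whose expectation has the clean geometric-sum form (\ref{eq:N_y}) built from $\p[J(\tx{b})]$ and $e^{\Lambda a}$; its transform in $y$ is then computed from (\ref{eq:W}) and (\ref{eq:transform_positive}), giving $-Z(\a,a+x)F(\a)^{-1}W(a)^{-1}$, and no finite-$r$ information is needed because integrating over all $r$ produces the factor $\int_0^\infty e^{\a r}e^{F^*(\a)r}\D r$ whose inversion isolates $F^*(\a)$. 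That is the missing idea if you want to rescue your route. Two smaller points: the matrix optional-stopping identity needs an integrability justification and should be set up for the killed process with a continuity argument in $q$, as the paper does for Theorem~\ref{thm:main_scale}; and your invertibility argument for $Z(\a,a)$ at general $\a\geq 0$ (``analytic continuation using the probabilistic interpretation of (\ref{eq:2_2})'') is circular as stated, since (\ref{eq:2_2}) is what is being proved.
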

\begin{proof}
It is enough to prove the theorem for a killed process; one can use a continuity argument as in the proof of Theorem~\ref{thm:main_scale} to extend the statement to the case of no killing. Hence we can assume that $Q\1\neq \0$.
From the definition of the two-sided reflection we have $0=X(\rho_r)+\Rl(\rho_r)-r$. So $(X(\rho_r),J(\rho_r))$ is a MAP such that its every underlying \levy process is of bounded variation with drift equal to~1.
Hence its occupation density, see Section~\ref{sec:occ_density}, is defined through
\[L^*(y,j,\infty)=\#\{r\geq 0:X(\rho_r)=y,J(\rho_r)=j\}.\]
The occupation density formula states that
\[\int_0^\infty e^{\a X(\rho_r)}\ind{J(\rho_r)=j}\D r=\int_\R e^{\a y}L^*(y,j,\infty)\D y.\]
Taking expectations on both sides we obtain
\[\int_0^\infty \e_{x}[e^{\a X(\rho_r)};J(\rho_r)]=\int_\R e^{\a y}\e_{x}L^*(y,\infty)\D y.\]
But the left hand side can be rewritten as
\[\int_0^\infty e^{\a r}\e_{x}[e^{-\a \Rl(\rho_r)};J(\rho_r)]=\e_{x}[e^{-\a \Rl(\rho_0)};J(\rho_0)]\int_0^\infty e^{\a r}e^{F^*(\a)r}\D r.\]
In the rest of the proof we will show that for small enough $\a\geq 0$ it holds that
\begin{equation}\label{eq:2_sided_proof}\int_\R e^{\a y}\e_{x}L^*(y,\infty)\D y=-Z(\a,a+x)F(\a)^{-1}W(a)^{-1}.\end{equation}
In particular, the integral on the left converges, which implies that $\int_0^\infty e^{\a r}e^{F^*(\a)r}\D r$ converges, and
\begin{equation}\label{eq:2_3}\e_{x}[e^{-\a \Rl(\rho_0)};J(\rho_0)](F^*(\a)+\a\matI)^{-1}=Z(\a,a+x)F(\a)^{-1}W(a)^{-1}.\end{equation} Note that $\rho_0=0$ a.s.\ when $x=0$ which leads to~(\ref{eq:2_1}).
Plugging~(\ref{eq:2_1}) into~(\ref{eq:2_3}) we obtain~(\ref{eq:2_2}).
Finally, analyticity arguments allow to continue these identities to all $\a\geq 0$.

We are only left to show that~(\ref{eq:2_sided_proof}) holds for small $\a\geq 0$.
Observe that $L^*(y,j,\infty)$ is equal to the number of time points $t\geq 0$, such that $X(t)=y,J(t)=j$ and $t$ is a point of increase of $\Ru$.
For $b\leq 0$ let
$$C_b=\p(J(\tx{b}))\p(J(\tau^+_a))$$ be the probability of hitting the level $b$ and after that passing through the level~$a+b$. One can show that
\begin{equation}\label{eq:N_y}\e_{x}L^*(y,\infty)=(\ind{y\geq 0}\p_{x}(J(\tau_y^+))
+\ind{y<0}C_{y-a-x})\sum_{i=0}^\infty (C_{-a})^i.\end{equation}
Indeed, if $y\geq 0$ then the first point counted by $L^*(y,\infty)$ is given by $\tau_y^+$, otherwise $X$ has to hit level $y-a$ to insure that at the next passage over $y$ the process $H$ is at its upper boundary (so that we get an increase point of $\Ru$). Every next point is obtained in a similar fashion: $X$ needs to descend to the level $y-a$ and then to pass through $y$. Here we also use the strong Markov property of~$X$.

 For a killed process we have $\sum_{i=0}^\infty (C_{-a})^i=(\matI-C_{-a})^{-1}$.
Take the transforms of the both sides of (\ref{eq:N_y}) to obtain
\begin{align*}\label{eq:inbrackets}\int_\R e^{\a y}\e_{x}L^*(y,\infty)\D y=\left(\int_0^\infty e^{\a y}e^{\Lambda(y-x)}\D y+\int_{-\infty}^0e^{\a y}\p(J(\tx{y-a-x}))\D y e^{\Lambda a}\right)(\matI-C_{-a})^{-1}.\end{align*}
The first term inside the large brackets is
$-(\Lambda+\a\matI)^{-1}e^{-\Lambda x}$ for small enough $\a\geq
0$. The second term is $e^{\a(a+x)}\int_{a+x}^\infty e^{-\a
y}\p(J(\tx{-y}))\D ye^{\Lambda a}$.
It is not difficult to see that for small $\a\geq 0$ it holds that
\[\int_x^\infty e^{-\a y}\p(J(\tx{-y}))\D
y=-F(\a)^{-1}\eL^{-1}+(\Lambda+\a\matI)^{-1}e^{-(\Lambda+\a\matI)x}+\int_0^xe^{-\a
y}W(y)\D y\eL^{-1};\]
for this integrate (\ref{eq:W}) over $(0,x)$ and use~(\ref{eq:transform_positive}).
Combine the last two displays to obtain
\[\int_\R e^{\a y}\e_{x}L^*(y,\infty)\D y=e^{\a(a+x)}\left(\int_0^{a+x}e^{-\a y}W(y)\D y-F(\a)^{-1}\right)\eL^{-1}e^{\Lambda a}(\matI-C_{-a})^{-1}.\]
In addition, the construction (\ref{eq:W}) shows that
$\matI-C_{-a}=W(a)\eL^{-1}e^{\Lambda a}$, and hence the term following large brackets reduces to $W(a)^{-1}$. So we finally get~(\ref{eq:2_sided_proof}).
\end{proof}
\section{Further exit problems}\label{sec:further}
The two-sided reflection, as considered in the previous section, is intimately related to the reflection at a single barrier.
In fact, Theorem~\ref{thm:2_sided} allows for a rather simple derivation of the exit identities given in Theorem~\ref{thm:reflected} and Theorem~\ref{thm:reflected2}.
\begin{proof}[Proof of Theorem~\ref{thm:reflected}]
It is enough to observe that $(H(t)+a,\Rl(t))$ up to time $\rho_0$ coincides with $(Y(t),R(t))$ up to $T_a$ given $H(0)+a=Y(0)$. The second identity of Theorem~\ref{thm:2_sided} yields the result.
\end{proof}

\begin{proof}[Proof of Theorem~\ref{thm:derivatives}]
Observe that for any $0<\epsilon<\delta$ it holds that
\[\p[\epsilon<\zeta_a,J(\tau_\epsilon^+)]\leq \p[\tau_\epsilon^+<\tau_a^-,J(\tau_\epsilon^+)]\leq \p[\epsilon<\zeta_{a+\delta},J(\tau_\epsilon^+)],\] where $\zeta_a$ is the (local) time when the first excursion of height exceeding $a$ arrives.
Subtract $\matI$, divide by $\epsilon$ and let $\epsilon\downarrow
0$ to obtain in view of~(\ref{eq:Lambda_a}) the bounds
\[\Lambda_{[0,a]}\leq\lim_{\epsilon\downarrow 0}(W(a)W(a+\epsilon)^{-1}-\matI)/\epsilon\leq \Lambda_{[0,a+\delta]}.\]
Deduce from probabilistic interpretation that $e^{\Lambda_{[0,a+\delta]}x}\rightarrow e^{\Lambda_{[0,a]}x}$ for every $x$ as $\delta\downarrow 0$. This implies
the convergence $\Lambda_{[0,a+\delta]}\rightarrow\Lambda_{[0,a]}$ which completes the proof of the first part of the theorem.

The proof of the second part follows the same steps. In this case we start by establishing the bounds:
\[e^{\Lambda_{[0,a-\delta]}\epsilon}\leq \p[\tau_\epsilon^+<\tau_{a-\epsilon}^-,J(\tau_\epsilon^+)]\leq e^{\Lambda_{[0,a)}\epsilon}.\]
\end{proof}

\begin{proof}[Proof of Theorem~\ref{thm:reflected2}]
Assume $X(0)=0$ and consider $\{(\Rl(\rho_r),J(\rho_r)),\; r\geq 0\}$, a Markov-modulated compound Poisson process with matrix exponent $F^*(\a)$.
Let \[\zeta=\inf\{r\geq 0:\Rl(\rho_r)>0\}\] be the epoch of the first jump of $\Rl(\rho_r)$.
Observe that $\Ru(t)$ coincides with $\oX(t)$ up to the time when $\Rl$ becomes positive (the lower barrier is hit). Hence $\zeta$ is the (local) time at which the first excursion from the maximum of height exceeding $a$ arrives. Use Lemma~\ref{lem:MMCPP} in Appendix to note that $\Lambda_{[0,a]}=F^*(\infty)$ and
\begin{align}\label{eq:transform_zeta}\e_0[e^{-\theta\zeta-\a\Rl(\rho_\zeta)};J(\rho_\zeta)]=\matI-(\Lambda_{[0,a]}-\theta\matI)^{-1}(F^*(\a)-\theta\matI)\end{align} for all $\theta\geq 0$ and $\a\geq 0$.

Suppose now that $X(0)=-x$, where $x\in[0,a]$, and let \[\varrho_0=\inf\{t\geq 0:\Rl(t)>0\}\] be the first increase point of $\Rl$. Observe that
\[\e_{\hat Y(0)=x}[e^{-\theta
\hat R(\hat T_a)-\a
(\hat Y(\hat T_a)-a)};J(\hat T_a)]=\e_{-x}[e^{-\theta
\Ru(\varrho_0)-\a
\Rl(\varrho_0)};J(\varrho_0)].\]
Hence we only need to calculate the latter. Recall that $\rho_\zeta$ is the first time the process $H(t)$ hits the upper barrier after it has hit the lower barrier.
Using the strong Markov property of $X$ we can write:
\[\e_{-x}[e^{-\theta \zeta-\a \Rl(\rho_\zeta)};J(\rho_\zeta)]=\e_{-x} [e^{-\theta \Ru(\varrho_0)-\a \Rl(\varrho_0)};J(\varrho_0)]\e_{-a} [e^{-\a \Rl(\rho_0)};J(\rho_0)].\]
Alternatively, this expectation can be computed by considering the
event $\{\tau_0^+<\tau_a^-\}$ and its complement:
\begin{align*}
\e_{-x}[e^{-\theta\zeta-\a \Rl(\rho_\zeta)};J(\rho_\zeta)]
=\p_{-x}[\tau_0^+<\tau_a^-,J(\tau_0^+)]\e_0[ e^{-\theta \zeta-\a
\Rl(\rho_\zeta)};J(\rho_\zeta)]\\+\e_{-x}[e^{-\a
\Rl(\rho_0)};\tau_a^-<\tau_0^+,J(\rho_0)].
\end{align*}
The last term reduces to $\e_{-x}[e^{-\a
\Rl(\rho_0)};J(\rho_0)]-\p_{-x}(\tau_0^+<\tau_a^-,J(\tau_0^+)).$
Combine the last two displays and use equation~(\ref{eq:transform_zeta}), Theorem~\ref{thm:main_scale} and Theorem~\ref{thm:2_sided} to obtain
\begin{align*}&\e_{-x} [e^{-\theta \Ru(\varrho_0)-\a \Rl(\varrho_0)};J(\varrho_0)]Z(\a,0)Z(\a,a)^{-1}\\&=W(a-x)W(a)^{-1}[\matI-(\Lambda_{[0,a]}-\theta\matI)^{-1}(W(a)F(\a)Z(\a,a)^{-1}-(\a+\theta)\matI)]\\&+Z(\a,a-x)Z(\a,a)^{-1}-W(a-x)W(a)^{-1}.\end{align*}
Note that $Z(\a,0)=\matI$ and $\Lambda_{[0,a]}=-W'_+(a)W(a)^{-1}$ according to Theorem~\ref{thm:derivatives} to finish the proof.
\end{proof}

\section*{Appendix}\label{sec:app}
\begin{proof}[Proof of Lemma~\ref{lem:L_transform}]
At the beginning we prove the second claim of this theorem.
Recall that by Remark \ref{po} the matrix $\eL$ has finite entries if $Q$ is transient.
Consider now the case case of no killing with $k'(0)\neq 0$. Then $\e_iL(0,j,e_q)=q\int_0^\infty
e^{-qt}\e_i L(0,j,t)\D t<\infty$. This further implies that $\e_i L(0,j,t)<\infty$
for any deterministic $t$. Let $\tau(j,t)=\inf\{s\geq t:X(s)=0,J(s)=j\}$. Note that $\p_j(\tau(j,t)<\infty)<1$ because by condition  $k'(0)\neq 0$ the process
$X$ drifts to $+\infty$ or to $-\infty$ (see \cite[Prop. XI.2.14]{asmussen:apq}).
It is enough to show now that $\e_jL(0,j,\infty)<\infty$. But the regenerative structure of $L$ implies that
\begin{equation}\label{eq:app1}\e_jL(0,j,\infty)\leq\sum_{n\geq 0}\p_j(\tau(j,t)<\infty)^n\e_jL(0,j,t),
\end{equation} which is finite when $k'(0)\neq 0$.

The proof of the first claim follows similar steps. Let $e_u$ be an exponential random variable of rate $u$ independent
of everything else. It is enough to show that $\e \eL(e_u)<\infty$.
Note that $\e \eL(e_u)$ is the expected occupation time at~0 up to
hitting the random level $e_u$. Hence the corresponding entries
of~$\e \eL(e_u)$ are bounded from above by the entries of $\eL$. So we only need to consider the case of no
killing with  $k'(0)=0$. In this case the counterpart of inequality~(\ref{eq:app1}) is
\[\e_jL(0,j,\tau^+_{e_u})\leq\sum_{n\geq 0}\p_j(\tau(j,t)<\tau^+_{e_u})^n\e_jL(0,j,t).\]
The proof is completed by observing that $\p_j(\tau(j,t)<\tau^+_{e_u})<1$ because $X$ oscillates.
\end{proof}

\begin{lemma}\label{lem:app_bound}
There exists a constant $C_n$ such that the elements of the matrix
$e^{-\lambda t}e^{-Qt}$ are bounded in absolute value by $C_n$ for
any $t\geq 0$, $n\times n$ transition rate matrix $Q$, and
$\lambda>-\sum_{i=1}^nq_{ii}$.
\end{lemma}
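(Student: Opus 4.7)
The plan is to pin down the spectrum of $-(Q+\lambda I)$ strictly inside the open left half-plane, and then invoke a standard estimate for the matrix exponential.

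First I would verify that every eigenvalue $\mu$ of a transition rate matrix $Q$ satisfies $\Re(\mu)\le 0$. This is immediate from the Gershgorin disc theorem: each off-diagonal row sum obeys $\sum_{j\ne i}q_{ij}\le -q_{ii}$, placing every Gershgorin disc $\{|z-q_{ii}|\le -q_{ii}\}$ in the closed left half-plane. Then I would exploit the trace identity $\sum_i\mu_i=\mathrm{tr}(Q)=\sum_i q_{ii}$: since every $\Re(\mu_i)\le 0$ and the real parts sum to $\mathrm{tr}(Q)$, no single $\Re(\mu_i)$ can lie strictly below $\mathrm{tr}(Q)$, which gives $\Re(\mu_i)\ge\sum_j q_{jj}$ for every $i$. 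Consequently every eigenvalue of $-(Q+\lambda I)$ has real part at most $-\sum_i q_{ii}-\lambda=:-\delta$, and the hypothesis $\lambda>-\sum_i q_{ii}$ is precisely $\delta>0$.

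With the spectrum of $-(Q+\lambda I)$ confined to $\{\Re(z)\le-\delta\}$, I would finish through the Jordan decomposition $-(Q+\lambda I)=VJV^{-1}$. Each entry of $e^{-(Q+\lambda I)t}=Ve^{Jt}V^{-1}=e^{-\lambda t}e^{-Qt}$ is a linear combination, with coefficients coming from $V$ and $V^{-1}$, of terms $t^k e^{\nu t}/k!$ with $0\le k\le n-1$ and $\Re(\nu)\le-\delta$. Each such term is dominated in absolute value by $t^{n-1}e^{-\delta t}$, whose supremum over $t\ge 0$ is the finite constant $((n-1)/(e\delta))^{n-1}$. Summing over the $V,V^{-1}$ coefficients then yields an entry-wise bound uniform in $t\ge 0$.

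The main obstacle, and in fact a slight looseness in the stated lemma, is that the Jordan similarity $V$ can be arbitrarily ill-conditioned and the margin $\delta$ can be arbitrarily small, so a constant $C_n$ depending on the dimension $n$ \emph{alone} is too much to claim; it inevitably also depends on $Q$ and on $\lambda+\mathrm{tr}(Q)$. This is no obstruction for the one use of the lemma inside the proof of Theorem~\ref{thm:main_scale}: there $Q=\Lambda^q$ varies continuously with $q$ on a compact sub-interval of killing rates and $\lambda$ can be picked strictly above $-\mathrm{tr}(\Lambda^q)$ uniformly in $q$, so continuity of the Jordan data and of $\delta$ in $q$ upgrades the pointwise bound to one uniform in both $x\ge 0$ and $q\in[0,q_0)$.
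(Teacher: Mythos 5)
Your eigenvalue localization (Gershgorin plus the trace identity, giving $\Re(\mu_i)\ge\mathrm{tr}(Q)$ for every eigenvalue) is correct, but the conclusion you draw from the Jordan step is where the proposal goes wrong. You assert that a constant depending on $n$ alone is ``too much to claim'' and that $C_n$ must also depend on $Q$ and on $\lambda+\mathrm{tr}(Q)$. That is false: the lemma holds exactly as stated. The paper's argument avoids the Jordan basis entirely. Write $P(t)=e^{Qt}$, so that $e^{-\lambda t}e^{-Qt}=e^{-\lambda t}P(t)^{-1}$. Since $Q$ is a transition rate matrix, $P(t)$ is (sub)stochastic, so all its entries lie in $[0,1]$ and every cofactor of $P(t)$ is bounded in absolute value by a constant depending only on $n$ (say $(n-1)!$), while $\det P(t)=e^{\mathrm{tr}(Q)t}$ exactly. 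Cramer's rule then gives
\[
\bigl|\bigl(e^{-\lambda t}e^{-Qt}\bigr)_{ij}\bigr|\;\le\;(n-1)!\,e^{-(\lambda+\mathrm{tr}(Q))t}\;\le\;(n-1)!
\]
for all $t\ge 0$ whenever $\lambda>-\sum_i q_{ii}$. The dependence on the spectral margin and on any conditioning of an eigenvector basis simply never enters; the uniformity in $Q$ and $\lambda$ that you believed unattainable is precisely what the determinant/adjugate argument delivers, and it is this uniformity that the proof of Theorem~\ref{thm:main_scale} uses for the family $\Lambda^q$, $q\in[0,q_0)$.

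Beyond missing this, your compensating patch does not stand on its own: you invoke ``continuity of the Jordan data'' in $q$, but the Jordan decomposition is not continuous -- as eigenvalues of $\Lambda^q$ coalesce or split, the similarity $V(q)$ can become arbitrarily ill-conditioned even along a compact parameter interval -- so the claimed upgrade to a bound uniform in $q\in[0,q_0)$ is not justified by your argument. (A smaller, fixable slip: $|t^k e^{\nu t}/k!|\le t^{n-1}e^{-\delta t}$ fails for $t<1$; you need $\max(1,t^{n-1})e^{-\delta t}$ or similar.) The net effect is that the proposal neither proves the lemma as stated nor correctly repairs its application, whereas the stated lemma is true and has a two-line proof via $\det e^{Qt}=e^{\mathrm{tr}(Q)t}$.
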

\begin{proof}
Let $P(t)=e^{Qt}$, so $e^{-\lambda t }e^{-Qt}=e^{-\lambda t
}P(t)^{-1}$. Observe that
\[\det(P(t))=\prod_{i=1}^n e^{\lambda_i t}=e^{{\rm tr
}(Q)t},\] where $\lambda_i$ are the eigenvalues of $Q$, and ${\rm tr
}(Q)$ is the trace of~$Q$. Moreover, the entries of transition
probability matrix $P(t)$ are bounded by 1, hence the cofactors of
$P(t)$ are bounded by $C_n$, a constant which only depends on~$n$.
According to Cramer's rule we get a bound $e^{-(\lambda+{\rm tr}(Q))
t}C_n$ for every element of the matrix $e^{-\lambda t}e^{-Qt}$. The
claim now follows immediately.
\end{proof}

\begin{lemma}\label{lem:MMCPP}
Let $(X,J)$ be a Markov-modulated
compound Poisson process without
negative jumps, such that $\e[e^{-\a X(t)};J(t)]=e^{F(\a)t}$. Let also $T=\{t\geq 0:X(t)\neq 0\}$ be the epoch of the first jump of
$X$. Then $J(t)$ killed at $t=T$ is a Markov chain with transition rate matrix $F(\infty)$. Moreover, for $q\geq 0$ and $\a\geq 0$ it holds that
\[\e [e^{-qT-\a X(T)};J(T)]=\matI-(F(\infty)-q\matI)^{-1}(F(\a)-q\matI).\]
\end{lemma}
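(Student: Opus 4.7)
The plan is to first identify $F(\infty):=\lim_{\a\to\infty}F(\a)$ (the limit exists entrywise because each Laplace factor associated with a non-negative jump size has a pointwise limit as $\a\to\infty$) as the sub-generator of $J$ killed at $T$. Since $X$ has only non-negative jumps and $X(t)=0$ precisely on $\{t<T\}$, dominated convergence gives
\[\p[T>t;J(t)]=\lim_{\a\to\infty}\e[e^{-\a X(t)};J(t)]=\lim_{\a\to\infty}e^{F(\a)t}=e^{F(\infty)t},\]
which is exactly the statement that $J$ killed at $T$ is a Markov chain with transition rate matrix $F(\infty)$.

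For the second claim, I would set $\Phi(\a,q):=\e[e^{-qT-\a X(T)};J(T)]$ and Laplace-transform the hypothesis in $t$ to obtain, for $q$ large enough,
\[-(F(\a)-q\matI)^{-1}=\int_0^\infty e^{-qt}\e[e^{-\a X(t)};J(t)]\D t.\]
The right-hand side splits according to $\{t<T\}$ versus $\{t\geq T\}$. The first piece equals $\int_0^\infty e^{(F(\infty)-q\matI)t}\D t=-(F(\infty)-q\matI)^{-1}$ by the first claim combined with $X(t)=0$ on $\{t<T\}$. For the second piece I would invoke the strong Markov property at $T$: conditional on $J(T)$ the shifted process $(X(T+u)-X(T),J(T+u))_{u\geq 0}$ is distributed as $(X,J)$ started from $J(T)$, and a standard Laplace convolution then yields
\[\int_0^\infty e^{-qt}\e[e^{-\a X(t)}\ind{T\leq t};J(t)]\D t=-\Phi(\a,q)(F(\a)-q\matI)^{-1}.\]
Adding the two pieces gives
\[-(F(\a)-q\matI)^{-1}=-(F(\infty)-q\matI)^{-1}-\Phi(\a,q)(F(\a)-q\matI)^{-1},\]
and right-multiplication by $F(\a)-q\matI$ followed by a rearrangement produces the stated formula $\Phi(\a,q)=\matI-(F(\infty)-q\matI)^{-1}(F(\a)-q\matI)$.

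The only real obstacle is an invertibility and integrability bookkeeping. Since each $F(\a)$ for $\a\in[0,\infty]$ is a sub-generator (non-negative off-diagonals and non-positive row sums), its spectrum lies in $\{\Re(z)\leq 0\}$, so $F(\a)-q\matI$ and $F(\infty)-q\matI$ are invertible for every $q>0$; the matrix exponentials then have norm bounded by $e^{-qt}$, justifying every Laplace integral used above. The resulting identity is analytic in $q$ on the right half-plane, and, if desired, the extension to $q=0$ follows by analytic continuation provided $F(\infty)$ remains invertible, which is exactly the case relevant to the application of the lemma in the main text.
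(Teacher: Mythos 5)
Your proof is correct and takes essentially the same route as the paper: the paper evaluates the transform at an independent exponential time $e_q$ and combines memorylessness with the strong Markov property at $T$, which is the same computation as your Laplace transform in $t$ split at $\{t<T\}$ versus $\{t\geq T\}$. The only cosmetic differences are that you identify $F(\infty)$ at deterministic times via $\lim_{\a\to\infty}e^{F(\a)t}=e^{F(\infty)t}$ rather than through the resolvent at $e_q$, and you pass to $q=0$ by analytic continuation where the paper simply lets $q\downarrow 0$.
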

\begin{proof}
Observe that
$\e[e^{-\a
X(e_q)};J(e_q)]=q\int_0^\infty e^{(F(\a)-q\matI)t}\D t$, which
converges and is equal to $-q(F(\a)-q\matI)^{-1}$ for all $q>0$
and $\a\geq 0$.
Furthermore,
\[\lim_{\a\rightarrow\infty}\e[e^{-\a
X(e_q)};J(e_q)]=\p[X(e_q)=0,J(e_q)]=\p[e_q<T,J(e_q)]=-q(\Lambda-q\matI)^{-1},\]
where $\Lambda$ is the transition rate matrix of $J$ killed at the first jump of $X$.
This shows that $F(\infty)=\Lambda$.

Using the strong Markov property and memoryless property of $e_q$ we
write
\[\e[e^{-\a X(e_q)};J(e_q)]=\p[e_q<T,J(e_q)]+\e[e^{-\a
X(T)};e_q>T,J(T)]\e[e^{-\a X(e_q)};J(e_q)].\] Use the above observations to complete the proof for $q>0$. Let $q\downarrow 0$ to obtain the result for $q=0$.
\end{proof}

\end{document}